\documentclass[oneside,english]{amsart}
\usepackage{amsmath} 
\usepackage{amssymb}  
\usepackage{amsthm}
\usepackage{latexsym}
\usepackage{amsfonts,bbm}
\usepackage{xcolor}
\usepackage{mathtools}
\usepackage{enumerate}
\usepackage{cite}
\usepackage[foot]{amsaddr}
\usepackage{algorithm,algorithmic}
\usepackage[a4paper,left=2.0cm,right=2.0cm,top=2.5cm,bottom=2.5cm]{geometry}
\usepackage{multicol}

\usepackage{tikz}
\usetikzlibrary{calc,positioning, arrows.meta, shapes.geometric}
\usepackage{xargs}
\usepackage[hidelinks]{hyperref}
\usepackage{textcomp}

\usepackage{tikz}
\usetikzlibrary{positioning, arrows.meta, shapes.geometric}
\usepackage{graphicx}
\usepackage{xcolor}

\usepackage[colorinlistoftodos,prependcaption,textsize=small]{todonotes}
\usepackage{xargs}  
\usepackage{caption}
\usepackage[labelformat=simple]{subcaption}

\newcommand{\stack}[2]{\left[ \begin{smallmatrix} #1 \\ #2 \end{smallmatrix} \right]}

\newtheorem{thm}{Theorem}[section]

\newtheorem{lem}[thm]{Lemma}

\newtheorem{defn}[thm]{Definition}
\newtheorem{rem}[thm]{Remark}

\newtheorem{assumption}[thm]{Assumption}

\newcommand{\blk}{\color{black}}

\numberwithin{equation}{section}
\allowdisplaybreaks

\newcommand{\R}{\mathbb{R}}

\newcommand{\calA}{\mathcal A}
\newcommand{\calB}{\mathcal B}

\newcommand{\calE}{\mathcal E}

\newcommand{\calJ}{\mathcal J}
\newcommand{\calK}{\mathcal K}
\newcommand{\calL}{\mathcal L}

\newcommand{\calR}{\mathcal R}

\newcommand{\calS}{\mathcal S}

\newcommand{\calU}{\mathcal U}

\newcommand{\calW}{\mathcal W}
\newcommand{\calX}{\mathcal X}
\newcommand{\calY}{\mathcal Y}
\newcommand{\calZ}{\mathcal Z}

\newcommand{\frakA}{\mathfrak A}

\newcommand{\dom}[1]{D(#1)}
\newcommand{\domain}[1]{\operatorname{dom}(#1)}
\newcommand{\ran}[1]{R(#1)}

\newenvironment{smallpmatrix}
{\left(\begin{smallmatrix}}
{\end{smallmatrix}\right)}

\newenvironment{smallbmatrix}
{\left[\begin{smallmatrix}}
{\end{smallmatrix}\right]}

\title{Optimization-based control by interconnection of nonlinear port-{H}amiltonian systems}

\author{Till Preuster$^1$}\address{$^1$Junior Professorship Numerical Mathematics, Faculty of Mathematics, Chemnitz University of Technology, Germany\\ Mail: \textsc{\{till.preuster,manuel.schaller\}@math.tu-chemnitz.de}}
\author{Hannes Gernandt$^2$}\address{$^2$School of Mathematics and Natural Sciences, University of Wuppertal, Germany\\ Mail: \textsc{gernandt@uni-wuppertal.de}}
\author{Manuel Schaller$^1$}

\thanks{This work was funded by the Deutsche Forschungsgemeinschaft (DFG, German Research Foundation) – Project-ID 531152215 – CRC 1701.}

\begin{document}

\begin{abstract}
In this paper, we develop a control-by-interconnection approach for the stabilization of nonlinear port-Hamiltonian systems. Motivated by model predictive control, the controller is realized as a continuous-time primal–dual gradient flow associated with a finite-horizon, control-constrained optimal control problem. By exploiting its port-Hamiltonian structure, the optimization dynamics are interconnected with the nonlinear plant. Introducing a time-scale parameter for the optimization dynamics reveals a singularly perturbed closed-loop system, whose reduced dynamics correspond to the optimal finite-horizon feedback, while the boundary-layer dynamics capture convergence of the primal–dual variables to the optimality system. Using a composite Lyapunov function and singular-perturbation arguments, we prove asymptotic stability of the coupled plant–controller dynamics, which is local for sufficiently fast optimization dynamics and becomes global under additional assumptions. Numerical experiments for a nonlinear port-Hamiltonian oscillator illustrate the results.
\end{abstract}

\maketitle
\smallskip
\noindent \textbf{Keywords:} Port-Hamiltonian systems, control by interconnection, suboptimal model predictive control, singular perturbations, nonlinear control, optimal control
\smallskip

\section{Introduction}
Many real-world phenomena and industrial applications can be modeled as nonlinear control systems. A central and ongoing challenge in this setting is to design control laws that simultaneously stabilize the system, optimize a prescribed performance criterion, and satisfy given constraints. In the presence of non-quadratic cost functions, nonlinear dynamics, and state or control constraints, standard approaches such as the state-feedback law derived from the solution of the algebraic Riccati equation are no longer applicable. To address these limitations, nonlinear optimization-based control schemes such as model predictive control (MPC) have emerged as well-suited alternatives. In MPC, the current state of the system is measured (or estimated), a finite-horizon optimal control problem (OCP) subject to the system dynamics and constraints is solved, and a first portion of the resulting optimal control sequence is applied to the plant, see Figure~\ref{fig:MPC} for an illustration. Under suitable assumptions, an iterative application of this procedure leads to a stabilizing controller for the plant, see the textbooks~\cite{GrunPann16,RawlMayn17}. 
\begin{figure}[htb]
    \scalebox{1.0}{\begin{tikzpicture}[
  >=Latex,
  font=\small,
  block/.style={
    draw,
    thick,
    fill=gray!15,
    rounded corners=1pt,
    inner sep=.1cm,
    align=left,
    text width=5.5cm
  },
  wire/.style={thick, -Latex},
  lab/.style={font=\small, inner sep=1pt}
]

\node[block, text width=3.5cm] (plant) {
\vspace*{-.3cm}
\begin{equation*}
\dot x_p(t) = f_p(x_p(t),u_p(t))
\end{equation*}
};

\node[block, right=-0.0cm of plant, yshift=-1.3cm] (opt) {
\vspace*{-.25cm}
\begin{align*}
\min& \quad  K(x,u)\\[-.1cm]
\mathrm{s.t.}\ \dot x(\tau) &= f(x(\tau),u(\tau)),\ x(0)=x_0\\
(x,u)&\in \mathbb{X}\times \mathbb{U}
\end{align*}
};

\coordinate (plantE) at ($(plant.east)+(0.2,0)$);
\coordinate (optN)   at ($(opt.north)+(0,0.2)$);
\coordinate (optW)   at ($(opt.west)+(-0.2,0)$);
\coordinate (plantS) at ($(plant.south)+(0,-0.2)$);

\draw[wire]
(plant.east) -- ++(2.85,0)
node[lab, above, pos=0.55]
{$x_0 = x_p(t)$}
-- (opt.north);

\draw[wire]
(opt.west) -- ++(-1.85,0)
node[lab, below, pos=0.8]
{$u_p(t)=u^*(0)$}
-- (plant.south);

\end{tikzpicture}}
\caption{Illustration of model predictive control}
\label{fig:MPC}
\vspace*{-.5cm}
\end{figure}
\noindent However, for small sampling times and in view of increasing complexity of real-world applications such as problems governed by partial differential equations, it can be challenging to compute the optimal solution in a short time. As a consequence, there is a need for MPC approaches that utilize suboptimal solutions as feedback, obtained e.g.\ by an iterative optimization algorithm. In these methods, the solution of the optimal control problem on the bottom right of Figure~\ref{fig:MPC} is replaced by an inexact solution given, e.g., by the truncation of an iterative optimization method. In recent decades, various suboptimal MPC schemes for finite-dimensional nonlinear control systems have been proposed and analyzed, see \cite[Section 8.9.2]{RawlMayn17} and \cite[Section 10.6]{GrunPann16}. In the seminal paper~\cite{GeSc_ScokMayn99}, the authors leverage non-optimal solutions for MPC with terminal ingredients, as long as a suitable Lyapunov decrease condition is satisfied. Another line of research is embedded in the context of continuation methods \cite[Section 8.9.2]{RawlMayn17} or initial-value embeddings \cite[p.~325ff]{GrunPann16} for optimization problems. One of the most prominent representatives is the real-time iteration~\cite{GeSc_DiehBock05}. There, one performs a Newton step for the optimal control problem, and it may be shown that, under suitable assumptions on the sampling time, the method yields asymptotic stability \cite{GeSc_DiehBock05}, see also the recent work \cite{Zanelli21} for a Lyapunov-based analysis of the coupled optimizer-plant dynamics. For a related perspective on continuous-time systems, see \cite{jakob2026generalized}. Third, and again related to our suggested approach, we mention instant MPC~(see \cite{YoshInou2019} for the case of linear dynamics), in which only one primal-dual gradient step is performed in the feedback computation. Despite the large body of literature regarding suboptimal MPC, the majority of the works provide feedback schemes in the absence of inequality constraints. Further, none of the works consider a function-space setting, which {is already} necessary for continuous-time optimal control problems as shown in this work and in particular is crucial to analyze suboptimal MPC for PDEs in future work, a topic {that remains largely} unexplored in the literature.


In this paper, we introduce a suboptimal MPC-like control scheme tailored to port-Hamiltonian systems (pHs). PHs \cite{maschke1993port} have emerged as a powerful modeling framework for physical systems and constitute a sub-class of dissipative systems~\cite{Brogliato07,Willems1972a}. They provide a unified energy-based representation that naturally encodes passivity, interconnection structure, and dissipation~\cite{SchJ14,jacob2012linear,MehU22}. Moreover, the port-Hamiltonian formalism offers powerful guarantees on stability and robustness under structured interconnections, making it particularly attractive for control design, e.g., via control by interconnection {(CbI)~\cite{ortega2002interconnection}}. CbI for pHs is a control paradigm in which a controller, {itself modeled} as a passive system, is interconnected with the plant in a power-preserving manner. The resulting closed-loop system inherits passivity, with its total energy given by the sum of the individual storage functions, enabling energy shaping and, in particular, stabilization.
However, achieving stabilization for pHs with input constraints remains a nontrivial task, as classical passivity-based methods typically do not address inequality constraints. 

To address this gap, we propose a CbI approach in the spirit of suboptimal model predictive control by formulating a continuous-time primal-dual gradient method for nonlinear control-constrained optimal control problems as a port-Hamiltonian system. To this end, we interconnect the dynamics of a gradient-type optimization algorithm via an MPC-type feedback with a port-Hamiltonian plant, leading to a (locally) asymptotically stable closed-loop system depicted in Figure~\ref{fig:subopt}. We provide stability results for primal-dual gradient dynamics and prove convergence of the interconnected system in function space. This is particularly crucial, as the convergence does not rely on any particular sampling or discretization scheme, as long as it preserves the port-Hamiltonian structure. Finally, the proposed analysis in function space paves the way toward suboptimal MPC for partial differential equations, a topic that has remained largely unexplored. In this context, we mention related works. Finite-dimensional saddle-point dynamics occurring in primal-dual gradient methods have been analyzed in \cite{CherMall2016,cherukuri2017saddle} and the port-Hamiltonian structure has been observed in \cite{StegPers15} for stationary finite-dimensional problems. In \cite{Pham22, vu2023port}, the authors suggest an MPC-type control-by-interconnection of pHs for discrete-time finite-dimensional optimal control problems.

\begin{figure}[htb]
    \scalebox{1.0}{\begin{tikzpicture}[
  >=Latex,
  font=\small,
  block/.style={
    draw,
    thick,
    fill=gray!15,
    rounded corners=1pt,
    inner sep=.1cm,
    align=left,
    text width=5cm
  },
  wire/.style={thick, -Latex},
  lab/.style={font=\small, inner sep=1pt}
]

\node[block] (plant) {
\vspace*{-.3cm}
\begin{align*}
\dot x_p(t) &= f_p(x_p(t)) + B_p u_p(t)\\
y_p(t) &= B_p^\top \nabla H(x_p(t))
\end{align*}
};

\node[block, right=-1cm of plant, yshift=-1.3cm] (opt) {
\vspace*{-.25cm}
\begin{align*}
\varepsilon \dot w(t) &= \calS(w(t)) + \iota_3 u_\mathrm{opt}(t)\\
y_\mathrm{opt}(t) &= \iota_3^* w(t)
\end{align*}
};

\coordinate (plantE) at ($(plant.east)+(0.2,0)$);
\coordinate (optN)   at ($(opt.north)+(0,0.2)$);
\coordinate (optW)   at ($(opt.west)+(-0.2,0)$);
\coordinate (plantS) at ($(plant.south)+(0,-0.2)$);

\draw[wire]
(plant.east) -- ++(1.6,0)
node[lab, above, pos=0.95]
{$u_\mathrm{opt}(t) = -x_p(t)$}
-- (opt.north);

\draw[wire]
(opt.west) -- ++(-1.6,0)
node[lab, below, pos=0.9]
{$u_p(t)=s(y_\mathrm{opt}(t))$}
-- (plant.south);

\end{tikzpicture}}
\caption{Proposed MPC-type CbI scheme, where $\iota_3$ is the canonical embedding of the plant state into $\calW$ and $s(\cdot)=\Pi_{[\underline{u},\overline{u}]}(-\tfrac{1}{\alpha}B_p^\top\,\cdot\,)$}
\label{fig:subopt}

\vspace*{-.2cm}
\end{figure}

A distinguishing feature of the proposed interconnection is the separation between the physical time scale of the plant and the convergence time scale of the optimization dynamics. We introduce a parameter $\varepsilon>0$ that controls the relative speed of the primal–dual dynamics. The interconnected plant–controller system then takes the singularly perturbed form depicted in Figure \ref{fig:subopt}, where $x_p$ denotes the plant state and $w$ collects the primal and dual optimization variables. In the limit $\varepsilon\to0$, the optimization variables satisfy the optimality system associated with the current plant state, and the reduced dynamics recover the corresponding finite-horizon feedback. This viewpoint makes it possible to quantify how fast the optimization dynamics must evolve relative to the plant to preserve closed-loop stability.

\noindent\textbf{Contributions and outline.}
The main contributions of this work are threefold. First, we formulate the continuous-time primal–dual gradient dynamics associated with a finite-horizon, possibly control-constrained optimal control problem as a port-Hamiltonian system on an infinite-dimensional state space. We establish well-posedness and convergence of these optimization dynamics toward the unique solution of the corresponding optimality system. Second, we interconnect the optimizer with a nonlinear port-Hamiltonian plant through a structure-preserving feedback law. By introducing a parameter $\varepsilon>0$ that determines the speed of the optimizer, we identify the resulting plant–controller dynamics as a singularly perturbed system whose reduced dynamics correspond to the optimal finite-horizon feedback. Third, using a composite Lyapunov function, we prove local asymptotic stability of the full interconnected system for all sufficiently small $\varepsilon$. Under global regularity and stability assumptions, we obtain global asymptotic stability. The analysis includes pointwise control constraints through a projected primal–dual dynamics and is independent of a particular structure-preserving discretization. Finally, we illustrate the influence of the optimizer speed and the input constraints by means of a numerical example involving a nonlinear port-Hamiltonian oscillator.

\noindent \textbf{Code availability.} The code for all numerical experiments is provided in the repository \url{https://github.com/preustertill/suboptimal_MPC.git}.

\blk
\section{Preliminaries and nonlinear pHs}
\label{sec:PHS}

This section collects the notation and operator-theoretic concepts used throughout the paper. We then introduce the class of nonlinear pHs employed to describe both the plant and the primal-dual optimization dynamics.

\subsection{Notation and operator-theoretic preliminaries}

Let $(X,\langle\cdot,\cdot\rangle_X)$ and
$(U,\langle\cdot,\cdot\rangle_U)$ be real Hilbert spaces, with induced norms
$\|\cdot\|_X$ and $\|\cdot\|_U$, respectively. We denote the space of bounded linear operators from $U$ to $X$ by $L(U,X)$. For $B\in L(U,X)$, the Hilbert space adjoint is denoted by $B^*\in {L(X,U)}$.

An operator $M:X \supset \dom{M} \to X$ is called \emph{monotone} if
\begin{equation} \label{eq:M_dissip}
    \langle M(x)-M(y),x-y\rangle_X \geq 0
\end{equation}
for all $x,y\in\dom{M}$. It is called \emph{maximally monotone}, or \emph{m-monotone}, if its graph is not properly contained in the graph of any other monotone operator. Equivalently, $M$ is maximally monotone if $ \ran{(\lambda I+M)}=X$ for some, and hence for every, $\lambda>0$; see, e.g., \cite{Barb10}. 

Let $H:X\to\mathbb R$ be Fr\'echet differentiable. {That is}, for every $x\in X$, there exists a bounded linear functional $DH(x)\in X^*$ such that
\begin{equation*}
      \lim_{\|h\|_X\to0}
    \frac{| H(x+h)-H(x)-DH(x)h|
    }{\|h\|_X}
    =0.
\end{equation*}
Using the Riesz representation theorem, we identify $DH(x)\in X^*$ with its gradient $\nabla H(x)\in X$, characterized by
\begin{equation*}
     DH(x)h=\langle\nabla H(x),h\rangle_X,
    \qquad h\in X.
\end{equation*}
For $\mu>0$, the function $H$ is called \emph{$\mu$-strongly convex} if
\begin{equation}
\label{eq:strong_convexity_H}
    H(y)
    \geq
    H(x)
    +\langle\nabla H(x),y-x\rangle_X
    +\tfrac{\mu}{2}\|y-x\|_X^2
\end{equation}
for all $x,y\in X$. In what follows, strongly convex functions that represent the stored energy of a system will be referred to as \emph{Hamiltonians}. We denote the \emph{orthogonal projection} onto a closed convex set $F \subset U$ in $U$ by $P_F:U \to F$.

\subsection{Nonlinear pHs}

We now introduce the system class used for the plant and the optimization dynamics.

\begin{defn}
\label{def:mono_phs}
Let $H:X\to\mathbb R$ be a Fr\'echet differentiable and $\mu$-strongly convex Hamiltonian. Furthermore, let $ J:X\supset\dom{J}\to X$ be a possibly nonlinear operator satisfying
\begin{equation}\label{eq:J_lossless}
    \langle J(z),z\rangle_X=0,
    \qquad z\in\dom{J},
\end{equation}
and let $R:X\to X$ be continuous and accretive, that is,
\begin{equation}\label{eq:diss}
    \langle R(z),z\rangle_X\geq0,
    \qquad z\in X.
\end{equation}
Finally, let $B\in L(U,X)$. The \emph{nonlinear port-Hamiltonian system} $(H,J,R,B)$ with an input $u$ and output $y$ is given by
\begin{subequations}
\label{eq:monotone_phs}
\begin{align}
    \dot{x}(t)
        &=
        J\bigl(\nabla H(x(t))\bigr)
        -R\bigl(\nabla H(x(t))\bigr)
        +Bu(t),
        \label{eq:monotone_phs_state}
        \\
    y(t)
        &=
        B^*\nabla H(x(t)).
        \label{eq:monotone_phs_output}
\end{align}
\end{subequations}

If, in addition, $J$ is linear and skew-adjoint and $R$ is monotone in the sense of \eqref{eq:M_dissip}, we call \eqref{eq:monotone_phs} an \emph{incrementally monotone port-Hamiltonian system}.
\end{defn}
Let $T>0$ and $u\in L^1_{\mathrm{loc}}([0,T];U)$. A function $ x:[0,T)\to X$ is called a \emph{classical solution} of \eqref{eq:monotone_phs} on $[0,T)$ corresponding to $x_0\in X$ if
\begin{enumerate}
    \item[i)] $x\in C^1([0,T);X)$;
    \item[ii)] $\nabla H(x(t)) \in \dom{J}$ for every $t\in [0,T)$;
    \item[iii)] $x(0)=x_0$;
    \item[iv)] $x(t)$ fulfills \eqref{eq:monotone_phs_state} for every $t\in [0,T)$.
\end{enumerate}
The corresponding \emph{mild solutions} are understood in the sense of \cite[Def. 11.1.3]{CurtainZwart2020}.

The properties
\eqref{eq:J_lossless} and \eqref{eq:diss} yield the standard energy inequality. Indeed, along every classical solution,
\begin{align}
    \tfrac{\mathrm d}{\mathrm dt}H(x(t))
        = 
        -
        \left\langle
            R\bigl(\nabla H(x(t))\bigr),
            \nabla H(x(t))
        \right\rangle_X
        +
        \langle y(t),u(t)\rangle_U
        \leq
        \langle y(t),u(t)\rangle_U.
\label{eq:ph_passivity}
\end{align}
Consequently, \eqref{eq:monotone_phs} is passive, i.e., it is dissipative with storage function $H$ and supply rate $\langle y,u\rangle_U$. We refer to \cite{van2004port,SchJ14} for general passivity results for nonlinear pHs.

If the underlying pHs is incrementally monotone, that is, $J$ is linear and skew-adjoint and $R$ is monotone, the dissipation inequality can also be formulated to relate two trajectories. This leads to incremental passivity with a storage function constructed from the Bregman divergence associated with $H$; see \cite{CamSch23}.

A broad range of models from physics and convex optimization fits into the framework \eqref{eq:monotone_phs}, see \cite{van2004port,ortega2002interconnection,CamSch23}  where also related nonlinear port-Hamiltonian system classes can be found. 
We briefly comment on the existence of solutions. In finite dimensions, that is, for $X=\mathbb R^n$ and $U=\mathbb R^m$, local existence of classical solutions follows from standard ordinary differential equation theory whenever $J$, $R$, and $\nabla H$ are locally Lipschitz continuous. Global existence follows under suitable growth conditions or from an a priori energy estimate derived from \eqref{eq:ph_passivity}, see e.g., \cite{esterhuizen26}. In infinite-dimensional spaces, well-posedness can be established by semigroup theory for semilinear evolution equations, see \cite[Ch. 11]{CurtainZwart2020}, whenever required.
\section{Port-Hamiltonian structures in constrained optimal control}\label{sec:optcont}
\noindent 
Before developing the control-by-interconnection scheme, we examine the dissipative structure inherent in the optimality conditions of inequality-constrained optimal control problems. To simplify notation, we abbreviate $\calY \coloneqq H^1([0,t_f],\R^n)$, $\calX \coloneqq L^2([0,t_f],\R^n)$, $\calU \coloneqq L^2([0,t_f],\R^m)$. Throughout this {paper}, we identify $\calX \cong \calX'$ and $\calU \cong \calU'$ by means of the Riesz isomorphism. For a given finite time-horizon $t_f>0$, we consider the finite-horizon optimal control problem
\begin{subequations}\label{eq:OCP}
\begin{align}
    \min_{(x,u)\in \calY \times \calU} 
    &
    K(x,u)
    \coloneqq
    \int_0^{t_f}
    \ell(x(\tau))
    + \tfrac{\alpha}{2}\|u(\tau)\|_{\R^m}^2
    \,\mathrm{d}\tau
    \label{eq:OCPa}
    \\
    \text{s.t.} \quad  &
    \mathcal{A}x - \mathcal{B}u
    =
    \stack{0}{x_0},
    \qquad
    u \in F
    \label{eq:OCPb}
\end{align}
\end{subequations}
where
\begin{itemize}
    \item $\ell:\R^n\rightarrow[0,\infty)$ is continuous and convex; 
    \item $\calA: \calX \supset  \calY \to \calX \times \R^n \eqqcolon \calZ $ encodes an ODE with system matrix $A \in \R^{n \times n}$ via
    \begin{equation*}
       \calA x \coloneqq \begin{bmatrix}\tfrac{\mathrm{d}}{\mathrm{d}\tau} x - Ax \\x(0)\end{bmatrix};
    \end{equation*}
    \item $\calB: \calU \to \calZ$ is the control operator given by
    \begin{equation}\label{eq:calB}
        \calB u = \stack{Bu}{0} \quad \text{with} \quad B \in \R^{n \times m};
    \end{equation}
    \item $\alpha > 0$ is a regularization parameter, and $x_0 \in \R^n$ is an initial value; \label{ass:init}
    \item $F$ denotes the admissible control set defined by
    \begin{align}\label{eq:F}
    F \!\coloneqq\! \left\{ u \in  \calU \, \middle| \, \underline{u}_i \!\leq\! u_i(t) \!\leq\! \overline{u}_i \ {\forall\, i,} \ \text{a.e. on} \ [0,t_f] \right\}
\end{align}
which imposes pointwise control constraints with bounds
$\underline{u},\overline{u}\in\R^m$ satisfying
$\underline{u}_i < 0 < \overline{u}_i$ for all $i=1,\dots,m$.
\end{itemize}
Throughout, we impose the following on the cost.
\begin{assumption}\label{ass:regularity_of_state_cost}
    We assume that 
    \begin{itemize}
        \item $\ell(0)=0$;
        \item the map $J:\calX \to \R$, $J(x) \coloneqq \int_0^{t_f} \ell(x(\tau))\, \mathrm{d}\tau$ admits a Fréchet derivative $\nabla J(x) \in \calX$ for all $x \in \calX$;
        \item the map {$J$} is $\mu$-strongly convex and $L$-smooth, i.e.,
        \begin{equation*}
           \hspace{-3mm} \mu \| x-y \|_\calX^2 \leq \langle \nabla J(x) - \nabla J(y), x-y \rangle_{\calX} \leq L  \| x-y \|_\calX^2
        \end{equation*}
        for all $x,y \in \calX$.
    \end{itemize}
\end{assumption}
To derive existence of optimal solutions to \eqref{eq:OCP} and associated optimality conditions, we first prove the following.
\begin{lem}\label{lem:calA}
    The operator $\calA$ admits a continuous inverse. Moreover, the Hilbert space adjoints of $\calA$ and $\calB$ are given by $\calA^*: \calX \times \R^n \supset \dom{\calA^*} \to \calX$ with
    \begin{equation*}
        \dom{\calA^*} = \left\lbrace (\lambda,\lambda_0) \in \calY \times \R^n\,\middle| \, 
             \lambda(0)=\lambda_0,  \lambda(t_f)=0
    \right\rbrace
    \end{equation*}
    and $\calB^*: \calX \times \R^n \to \calU$ with action
    \begin{equation*}
        \calA^* \stack{\lambda}{\lambda_0} =  -\tfrac{\mathrm{d}}{\mathrm{d}\tau} \lambda - A^\top \lambda, \qquad \calB^* \stack{\lambda}{\lambda_0} = B^\top \lambda.
    \end{equation*}
\end{lem}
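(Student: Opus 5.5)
We split the argument into three parts: invertibility of $\calA$, the adjoint $\calB^*$, and the adjoint $\calA^*$ together with its domain.

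\emph{Invertibility.} For $(f,x_0)\in \calX\times\R^n$ we solve $\calA x = \stack{f}{x_0}$ explicitly by variation of constants:
\begin{equation*}
x(\tau)=e^{A\tau}x_0+\int_0^\tau e^{A(\tau-s)}f(s)\,\mathrm{d}s .
\end{equation*}
This function lies in $\calY$, since $\dot x = Ax+f\in L^2$. Uniqueness follows from uniqueness of solutions to linear ODEs with $L^2$ (hence $L^1$) right-hand side; equivalently, the homogeneous problem with zero initial datum has only the trivial solution. Young's inequality gives
\begin{equation*}
\|x\|_{\calX}\le C\bigl(\|f\|_{\calX}+\|x_0\|\bigr), \qquad C \text{ depending on } \|A\| \text{ and } t_f .
\end{equation*}
Using $\dot x = Ax+f$, the same bound holds in the $H^1$-norm. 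Hence $\calA^{-1}:\calZ\to\calY$ is bounded, and a fortiori bounded into $\calX$. As a side effect, $\calA$ is closed as an operator in $\calX$, being the inverse of a bounded operator. It is also densely defined, since $H^1$ is dense in $L^2$. Consequently the Hilbert adjoint $\calA^*$ is well defined.

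\emph{The adjoint $\calB^*$.} This is immediate:
\begin{equation*}
\langle \calB u,(\lambda,\lambda_0)\rangle_{\calZ}=\langle Bu,\lambda\rangle_{\calX}=\langle u,B^\top\lambda\rangle_{\calU}.
\end{equation*}

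\emph{The adjoint $\calA^*$, inclusion ``$\supseteq$''.} Take $(\lambda,\lambda_0)\in\calY\times\R^n$ with $\lambda(0)=\lambda_0$ and $\lambda(t_f)=0$. For $x\in\calY$, integration by parts in $H^1$ (valid since $H^1\subset AC$) gives
\begin{equation*}
\int_0^{t_f}\langle \dot x-Ax,\lambda\rangle\,\mathrm{d}\tau+\langle x(0),\lambda_0\rangle
=\langle x(t_f),\lambda(t_f)\rangle-\langle x(0),\lambda(0)\rangle+\langle x(0),\lambda_0\rangle+\int_0^{t_f}\langle x,-\dot\lambda-A^\top\lambda\rangle\,\mathrm{d}\tau .
\end{equation*}
The boundary terms vanish, so $(\lambda,\lambda_0)\in\dom{\calA^*}$ with the stated action.

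\emph{The adjoint $\calA^*$, inclusion ``$\subseteq$''.} This is the main obstacle. Suppose $(\lambda,\lambda_0)\in\dom{\calA^*}$, i.e. there is $g\in\calX$ with $\langle\calA x,(\lambda,\lambda_0)\rangle_{\calZ}=\langle x,g\rangle_{\calX}$ for all $x\in\calY$. We test with four classes of $x$.

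First, take $x\in C_c^\infty((0,t_f),\R^n)$. This yields, in the distributional sense,
\begin{equation*}
-\dot\lambda - A^\top\lambda = g .
\end{equation*}
Hence $\dot\lambda=-g-A^\top\lambda\in L^2$, so $\lambda\in H^1=\calY$.

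Second, return to general $x\in\calY$ and integrate by parts as above. All interior terms cancel, leaving
\begin{equation*}
\langle x(t_f),\lambda(t_f)\rangle+\langle x(0),\lambda_0-\lambda(0)\rangle=0 \qquad \text{for all } x\in\calY .
\end{equation*}

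Third, choose $x$ with $x(0)=0$ and $x(t_f)$ arbitrary; this forces $\lambda(t_f)=0$. Fourth, choose $x$ with $x(t_f)=0$ and $x(0)$ arbitrary; this forces $\lambda(0)=\lambda_0$. Both choices are possible using affine functions in $\tau$.

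Together with the previous step, this identifies $\dom{\calA^*}$ and the action of $\calA^*$ exactly as stated.
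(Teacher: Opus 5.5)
Your proof is correct, and it differs from the paper's in two respects.

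\textbf{Invertibility.} You and the paper both solve $\calA x = \stack{f}{x_0}$ by variation of constants. The paper then gets continuity of $\calA^{-1}$ from the bounded inverse theorem, simply asserting that $\calA$ is closed. You instead bound $x$ directly: Young's inequality for the convolution term, then $\dot x = Ax+f$ for the derivative. This shows $\calA^{-1}$ is bounded even into $H^1$, with explicit dependence on $\|A\|$ and $t_f$. Closedness of $\calA$ then comes out as a consequence rather than being used as an input.

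\textbf{Adjoints.} The paper gives no argument here and refers to \cite[Lem.~3.2]{gernandt2025port}. You identify $\dom{\calA^*}$ by hand. For the inclusion $\supseteq$ you integrate by parts. For $\subseteq$ you first test with $C_c^\infty$ functions to get the distributional identity $-\dot\lambda - A^\top\lambda = g$, hence $\lambda\in H^1$. You then use affine test functions to extract $\lambda(t_f)=0$ and $\lambda(0)=\lambda_0$. All of these steps check out, including the density of $\calY$ in $\calX$, which is needed for $\calA^*$ to be well defined.

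The paper's route is shorter but relies on an abstract theorem and an external lemma. Yours is self-contained and elementary.
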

\begin{proof}
    Let $(f,x_0) \in \calX \times \R^n$ be arbitrary. The equation
    \begin{equation*}
        \calA x = \stack{f}{x_0}
    \end{equation*}
    has a unique solution $x \in \calY$ given by
    \begin{equation*}
        x(t) \coloneqq e^{tA}x_0 + \int_0^{t} e^{(t-s)A}f(s) \, \mathrm{d}s.
    \end{equation*}
    Hence, $\calA$ defines a bijection between $\calY$ and $\calX \times \R^n$; in particular, $\calA^{-1}: \calX \times \R^n \to \calY$ exists. Since $\calA$ is closed, $\calA^{-1}$ is bounded by the bounded inverse theorem. The remaining assertions are proven in \cite[Lem. 3.2]{gernandt2025port}.
\end{proof}
Throughout this paper, we call $x \in \calY$ the \emph{primal variable} and we denote by $p \coloneqq  \stack{\lambda}{\lambda_0} \in \calZ$ the \emph{dual variable}. We define the product space $\calW\coloneqq \calX \times \calZ$ and the \emph{saddle operator}
\begin{equation*}
    \calS : \calW \supset \dom{\calA} \times  \dom{\calA^*} \to \calW
\end{equation*}
given by \begin{equation*}
    \calS (x,p)
\coloneqq 
\begin{bmatrix}
           0 & -\calA^* \vphantom{\nabla_xJ(x)}\\ \calA & 0 \vphantom{\left(\tfrac{1}{\alpha}\right)}
       \end{bmatrix}
       \begin{bmatrix}
         x  \vphantom{\nabla_xJ(x)} \\    p  \vphantom{\left(\tfrac{1}{\alpha}\right)}
    \end{bmatrix} - \begin{bmatrix}
        \nabla J(x) \\
        \calB P_F\left(\tfrac{1}{\alpha} \calB^* p\right) 
    \end{bmatrix}.
\end{equation*}
\begin{thm}\label{thm:existence_optimal_sol}
    Let $x_0 \in \R^n$ be an initial value. The optimal control problem \eqref{eq:OCP} has a unique solution $(x^\star, u^\star) \in \calY \times \calU$. Moreover, there exists a unique Lagrange multiplier $p^\star = (\lambda^\star, \lambda_0^\star) \in \dom{\calA^*}$ such that $(x^\star, p^\star)$ is a saddle point of the control-reduced \emph{Lagrangian} $\calL_\mathrm{r}: \dom{\calA} \times \dom{\calA^*} \to \R$ with
\begin{align}\label{eq:control_red_Lagrangian}
    \calL_\mathrm{r} \left(x,p \right)& \coloneqq J(x) + \langle \calA x -\stack{0}{x_0}  , p \rangle_{\calX \times \R^n} - \langle\calB^*p, P_F(\tfrac{1}{\alpha}\calB^*p) \rangle_\calU +  \tfrac{\alpha}{2}  \|P_F(\tfrac{1}{\alpha}\calB^*p)\|_\calU^2. 
\end{align}
    Equivalently, $(x^\star, p^\star)$ solves
    \begin{equation}\label{eq:optsys}
        \calS (x^\star, p^\star)=\begin{bmatrix}
        0 \\ \stack{0}{x_0} 
    \end{bmatrix}. 
    \end{equation}
    Moreover, the unique optimal control $u^\star$ fulfills 
    \begin{equation}\label{eq:optimal_control}
        u^\star = P_F\left(\tfrac{1}{\alpha} \calB^* p^\star \right) .
    \end{equation}
\end{thm}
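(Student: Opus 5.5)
Existence and uniqueness come from reducing the problem to the control alone. By Lemma~\ref{lem:calA}, $\calA$ has a bounded inverse, so every $u\in\calU$ determines a unique state
\[
x(u)=\calA^{-1}\bigl(\calB u+\stack{0}{x_0}\bigr),
\]
and this map is affine and continuous. The reduced functional is then
\[
\hat K(u)=J(x(u))+\tfrac{\alpha}{2}\|u\|_\calU^2 .
\]
It is continuous because $J$ is Fréchet differentiable. It is convex, since $J$ is convex and $x(\cdot)$ is affine. It is $\alpha$-strongly convex, because of the regularization term. The set $F$ is nonempty, closed and convex in $\calU$, and it contains $0$ since $\underline u_i<0<\overline u_i$. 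A continuous, strongly convex, coercive functional on a closed convex subset of a Hilbert space has exactly one minimizer $u^\star$. Setting $x^\star=x(u^\star)\in\calY$ gives the unique solution of \eqref{eq:OCP}.

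For the multiplier, we will not invoke an abstract constraint qualification; instead we construct $p^\star$ directly from the adjoint equation. The first-order condition for $\hat K$ on $F$ is the variational inequality
\[
\langle \nabla \hat K(u^\star),v-u^\star\rangle_\calU\ge 0\quad\text{for all } v\in F,
\qquad
\nabla\hat K(u)=\calB^*\calA^{-*}\nabla J(x(u))+\alpha u .
\]
Define $p^\star\coloneqq-\calA^{-*}\nabla J(x^\star)$, i.e.\ the unique solution of $\calA^*p^\star=-\nabla J(x^\star)$. Surjectivity and bounded invertibility of $\calA^*$ follow from those of $\calA$ via the closed range theorem. Concretely, $\lambda^\star$ solves the backward ODE $-\dot\lambda-A^\top\lambda=-\nabla J(x^\star)$ with $\lambda(t_f)=0$, and we set $\lambda_0^\star=\lambda^\star(0)$, so $p^\star\in\dom{\calA^*}$ by Lemma~\ref{lem:calA}. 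The variational inequality now reads
\[
\langle \alpha u^\star-\calB^*p^\star,v-u^\star\rangle_\calU\ge0\quad\text{for all } v\in F,
\]
which is exactly the characterization of the projection $u^\star=P_F(\tfrac1\alpha\calB^*p^\star)$. This proves \eqref{eq:optimal_control}.

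Next comes the equivalence with \eqref{eq:optsys}. The first row of $\calS(x^\star,p^\star)=[0;\stack{0}{x_0}]$ says $-\calA^*p^\star-\nabla J(x^\star)=0$, which is the adjoint equation. The second row says $\calA x^\star-\calB P_F(\tfrac1\alpha\calB^*p^\star)=\stack{0}{x_0}$, which is the state equation with $u^\star$ substituted. For the converse, suppose $(x,p)$ solves \eqref{eq:optsys} and set $u=P_F(\tfrac1\alpha\calB^*p)$. Reversing the computation shows that $u$ satisfies the variational inequality. Since $\hat K$ is strictly convex, the variational inequality characterizes the minimizer, so $u=u^\star$ and $x=x^\star$. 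Because $\calA^*$ is injective, $p=p^\star$. This gives uniqueness of the multiplier.

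The saddle-point property is where the real difficulty lies: we must show that the control-reduced Lagrangian $\calL_\mathrm r$ is convex in $x$ and concave in $p$. Convexity in $x$ is immediate, since $\calL_\mathrm r$ is $J$ plus terms affine in $x$. For concavity in $p$, we recognize that the last two terms of \eqref{eq:control_red_Lagrangian} equal
\[
\min_{u\in F}\Bigl(\tfrac\alpha2\|u\|_\calU^2-\langle\calB^*p,u\rangle_\calU\Bigr).
\]
The minimizer of this expression is $u=P_F(\tfrac1\alpha\calB^*p)$, obtained by completing the square. As a pointwise minimum of functions affine in $p$, this term is concave. Danskin's theorem, or a direct Moreau-envelope computation, gives its derivative as $-\calB P_F(\tfrac1\alpha\calB^*p)$. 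Hence the stationarity conditions $\nabla_x\calL_\mathrm r=0$ and $\nabla_p\calL_\mathrm r=0$ at $(x^\star,p^\star)$ are precisely \eqref{eq:optsys}. By convexity in $x$ and concavity in $p$, stationarity is equivalent to $(x^\star,p^\star)$ being a saddle point, which completes the argument.
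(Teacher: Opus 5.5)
Your proof is correct, but it gets the multiplier by a different route than the paper.

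\textbf{What is shared.} Both proofs reduce the problem to the control alone. Both use $\alpha$-strong convexity to get existence and uniqueness of $(x^\star,u^\star)$. Both identify the last two terms of $\calL_\mathrm{r}$ as an infimum over $u\in F$.

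\textbf{Where the routes differ.} The paper writes $\calL_\mathrm{r}=\inf_u\calL(x,u,p)$ via the Fenchel conjugate of $\tfrac12\|\cdot\|_\calU^2+i_F$. It then obtains existence and uniqueness of a $p^\star$ making $(x^\star,p^\star)$ a saddle point by citing an abstract convex-duality result (Bauschke--Combettes, Prop.~19.12). Only afterwards does it compute $\nabla_x\calL_\mathrm{r}$ and $\nabla_p\calL_\mathrm{r}$ to get \eqref{eq:optsys}, and it reads off $u^\star=P_F(\tfrac1\alpha\calB^*p^\star)$ as the inner minimizer.

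Your order is reversed:
\begin{enumerate}
\item You construct $p^\star=-\calA^{-*}\nabla J(x^\star)$ explicitly as the backward adjoint state.
\item You derive the projection formula from the variational inequality of the reduced functional.
\item You get uniqueness from uniqueness of the minimizer together with injectivity of $\calA^*$.
\item Only then do you verify the saddle-point property, via convex--concave structure plus stationarity.
\end{enumerate}

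\textbf{What each route buys.} Yours is more elementary and self-contained. It uses that the state equation is uniquely solvable, so no constraint qualification or abstract duality theorem is needed, and you do not rely on a cited result whose hypotheses the paper does not check. It also makes transparent that $\lambda_0^\star=\lambda^\star(0)$ is the adjoint at time zero, which is exactly what the later feedback $\kappa$ and the value-function sensitivity use. The paper's route explains where $\calL_\mathrm{r}$ comes from. It also sets up the Fenchel-conjugate machinery that is reused for the perturbed dual problem in Theorem~\ref{thm:bijectivity}.

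\textbf{Two minor technical remarks.}
\begin{itemize}
\item For the derivative of the min-term, the Moreau-envelope identity $\nabla\tfrac12 d_F^2=I-P_F$ is the safer justification. Danskin's theorem needs compactness, and $F$ is not norm-compact in $\calU$.
\item Your step ``saddle point $\Rightarrow$ stationarity'' (needed for uniqueness among saddle points) deserves one line. $\calL_\mathrm{r}$ lives only on the dense subspaces $\dom{\calA}\times\dom{\calA^*}$, but the partial maps extend differentiably to all of $\calX$ and $\calZ$. Vanishing directional derivatives on a dense subspace therefore force the gradients to vanish.
\end{itemize}
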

\begin{proof}
By Lemma \ref{lem:calA}, we equivalently rewrite the optimal control problem \eqref{eq:OCP} by the (unconstrained) optimization problem
\begin{equation}\label{eq:composite}
    \min_{u \in \calU} g(u) \coloneqq K(\calA^{-1}(\calB u + \stack{0}{x_0}),u) + i_F(u)
\end{equation}
where $K$ abbreviates the cost (see \eqref{eq:OCP}) and the \emph{indicator function} $i_F: \calU \to \R \cup \{+\infty\}$ with
\begin{equation*}
    i_F(u)\coloneqq \renewcommand{\arraystretch}{0.1}
\begin{cases}
0, & \text{if } u \in F,\\
+\infty, & \text{otherwise}
\end{cases}
\end{equation*}
As $K(x,u)=J(x) + \tfrac{\alpha}{2} \| u\|_{\calU}^2$ and $J$ is convex, $\Tilde{K}(u) \coloneqq K(\calA^{-1}(\calB u + \stack{0}{x_0}),u)$ is $\alpha$-strongly convex. Moreover, since $F$ is convex, $i_F$ is also convex. Hence, \eqref{eq:composite} defines an $\alpha$-strongly convex optimization problem, which has a unique solution $u^\star \in \calU$, \cite[Prop. 2.1]{EkelTema99}. Thus, $(x^\star, u^\star) \in \calY \times \calU$ where $x^\star \coloneqq \calA^{-1}(\calB u^\star + \stack{0}{x_0})$ defines a unique solution of \eqref{eq:OCP}.

To deduce optimality conditions, observe that the full Lagrangian associated with \eqref{eq:OCP} is $\calL: \calY \times \calU \times \dom{\calA^*} \to \R$:
    \begin{equation*}
        \calL (x,u,p) \coloneqq K(x,u) + \langle \calA x - \calB u -\stack{0}{x_0}  , p \rangle_{\calX \times \R^n} + i_F(u).
    \end{equation*}
    In what follows, the superscript ${}^*$ applied to a functional denotes its Fenchel conjugate\footnote{Let $\calE$ be some Banach space and $h: \calE \to \R \cup \{\pm \infty\}$ be a function. The \emph{Fenchel conjugate} $h^* : \calE \to \R \cup \{\pm \infty\}$ of $h$ is defined by
    \begin{equation*}
         h^* (w) \coloneqq \sup_{z \in \calE} (\langle z, w \rangle_\calE - h(z)).
    \end{equation*}}. Note that for each $p \in \calX \times \R^n$:
    \begin{align*}
       \inf_{u \in \calU}  \left(\tfrac{\alpha}{2} \| u\|_{\calU}^2 + i_F(u) - \langle  u , \calB^* p \rangle_{\calU}\right) 
        &= - \sup_{u \in \calU}  \left(\langle  u , \calB^* p \rangle_{\calU}-( \tfrac{\alpha}{2} \| u\|_{\calU}^2 + i_F(u))\right) \\
        &= - \alpha \sup_{u \in \calU}  \left(\langle  u , \tfrac{1}{\alpha}\calB^* p \rangle_{\calU}-( \tfrac{1}{2} \| u\|_{\calU}^2 + i_F(u))\right) \\
        & = - \alpha( \tfrac{1}{2} \| \cdot \|_{\calU}^2 + i_F)^*(\tfrac{1}{\alpha}\calB^*p),
    \end{align*}
    where $\tfrac{1}{\alpha} i_F = i_F$ was used. By \cite[Ex. 13.5]{BausComb2011}, we conclude
    \begin{align*}
        ( \tfrac{1}{2} \| \cdot \|_{\calU}^2 + i_F)^* (v) 
       = \tfrac{1}{2} \left( \| v \|^2_\calU - d_F(v)^2 \right) = \tfrac{1}{2} \left(  \| v \|^2_\calU - \| v-P_F(v)\|_\calU^2 \right) =  \langle v, P_F(v) \rangle_\calU - \tfrac{1}{2}  \|P_F(v)\|_\calU^2 
    \end{align*}
    for all $v \in \calU$. Hence, minimizing the Lagrangian $\calL$ over $u \in \calU$ gives the reduced Lagrangian
    \begin{align*}
        \calL_\mathrm{r} (x,p) &\coloneqq \inf_{u \in \calU} \calL(x,u,p) \\
        &= J(x) + \langle \calA x -\stack{0}{x_0}  , p \rangle_{\calX \times \R^n} + \inf_{u \in \calU}  \tfrac{\alpha}{2} \| u\|_{\calU}^2 + i_F(u) - \langle  u , \calB^* p \rangle_{\calU} \\
        &= J(x) + \langle \calA x -\stack{0}{x_0}  , p \rangle_{\calX \times \R^n} - \alpha ( \tfrac{1}{2} \| \cdot \|_{\calU}^2 + i_F)^*(\tfrac{1}{\alpha}\calB^*p) \\
        &= J(x) + \langle \calA x -\stack{0}{x_0}  , p \rangle_{\calX \times \R^n} -   \langle\calB^*p, P_F(\tfrac{1}{\alpha}\calB^*p) \rangle_\calU +  \tfrac{\alpha}{2}  \|P_F(\tfrac{1}{\alpha}\calB^*p)\|_\calU^2.
    \end{align*}
    Since the Fenchel conjugate of a function is always convex,the reduced Lagrangian is concave in the dual variable. By \cite[Prop. 19.12]{BausComb2011} there exists a unique $p^\star \in \dom{\calA^*}$ such that $(x^\star, p^\star)$ is a saddle point of $\calL_\mathrm{r}$. Again by \cite[Prop. 19.12]{BausComb2011}, this is equivalent to
    \begin{equation*}
        0 = \nabla_x \calL_\mathrm{r}(x^\star, p^\star), \qquad 0 = \nabla_p \calL_\mathrm{r}(x^\star, p^\star).
    \end{equation*}
    {Clearly}, the first condition is equivalent to the first line of \eqref{eq:optsys}. By \cite[Prop. 24.4]{BausComb2011}, 
    \begin{equation*}
        \nabla_v (\tfrac{1}{2} \| \cdot \|_{\calU}^2 + i_F)^*(v)  = P_F(v)
    \end{equation*}
    for all $v \in \calU$. The chain rule for Fréchet derivatives yields
    \begin{align*}
       \nabla_p \calL_\mathrm{r}(x, p)[h]  
       &= \langle \calA x-\stack{0}{x_0}, h \rangle_{\calX \times \R^n} - \alpha  \nabla_v (\tfrac{1}{2} \| \cdot \|_{\calU}^2 + i_F)^*(\tfrac{1}{\alpha} \calB^* p)[\tfrac{1}{\alpha} \calB^* h] \\
        &=  \langle \calA x-\stack{0}{x_0}, h \rangle_{\calX \times \R^n}  - \alpha \left\langle P_F(\tfrac{1}{\alpha} \calB^* p),\tfrac{1}{\alpha} \calB^* h\right\rangle_\calU
    \end{align*}
    for all directions $h \in \calZ$. Again, by identifying the Hilbert space dual $\calZ'$ with $\calZ$ via the Riesz map we get the optimality condition
    \begin{equation*}
        0 =  \calA x^\star - \calB P_F(\tfrac{1}{\alpha} \calB^* p^\star)-\stack{0}{x_0},
    \end{equation*}
    which is the second line in \eqref{eq:optsys}. Finally, the unique minimizer in the definition of the reduced Lagrangian enjoys
\begin{align*}
    \arg\min_{u \in \calU}
    \left(
        \tfrac{\alpha}{2}\|u\|_{\calU}^2
        + i_F(u)
        - \langle u,\calB^*p^\star\rangle_{\calU}
    \right) =
    \arg\min_{u \in F}
    \tfrac{\alpha}{2}
    \left\|
        u - \tfrac{1}{\alpha}\calB^*p^\star
    \right\|_{\calU}^2 =
    P_F\left(
        \tfrac{1}{\alpha}\calB^*p^\star
    \right).
\end{align*}
Since $u^\star$ is the optimal control corresponding to the saddle point, this yields \eqref{eq:optimal_control}.
\end{proof}
We collect some important properties of the optimality operator $\calS$.
\begin{thm}\label{thm:bijectivity}
The operator $\calS$ is bijective and $\calS(0)=0$. Moreover, \(\calS^{-1} \) is Lipschitz-continuous, i.e., there exists $L_{\calS}> 0$ such that for all $h_1,h_2 \in \calW$:
\begin{equation*}
    \| \calS^{-1}(h_1)-\calS^{-1}(h_2) \|_\calW \leq L_{\calS} \| h_1-h_2 \|_\calW.
\end{equation*}
\end{thm}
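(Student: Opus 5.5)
The plan is to work with $\calT \coloneqq -\calS$. Writing $w=(x,p)$, it reads
\begin{equation*}
    \calT(x,p)=\begin{bmatrix} \calA^* p + \nabla J(x)\\ -\calA x + \calB P_F(\tfrac{1}{\alpha}\calB^*p)\end{bmatrix}.
\end{equation*}
This splits into a linear skew part $\begin{bmatrix} 0 & \calA^*\\ -\calA & 0\end{bmatrix}$ and a monotone nonlinear part.

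\textbf{Step 1: $\calS(0)=0$ and monotonicity.} Since $\ell\geq 0$ and $\ell(0)=0$, the point $x=0$ minimizes the convex functional $J$, so $\nabla J(0)=0$. Since $0\in F$ (because $\underline{u}_i<0<\overline{u}_i$), we have $P_F(0)=0$. Hence $\calS(0)=0$. For $w,w'$ in the domain, the skew part drops out of $\langle \calT(w)-\calT(w'),w-w'\rangle_\calW$. Using Assumption~\ref{ass:regularity_of_state_cost} and firm nonexpansiveness of $P_F$, with $\delta x=x-x'$, $\delta p=p-p'$ and $\delta P = P_F(\tfrac{1}{\alpha}\calB^*p)-P_F(\tfrac{1}{\alpha}\calB^*p')$, I expect
\begin{equation*}
    \langle \calT(w)-\calT(w'),w-w'\rangle_\calW \geq \mu\|\delta x\|_\calX^2 + \alpha\|\delta P\|_\calU^2 .
\end{equation*}

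\textbf{Step 2: surjectivity.} Fix $h=(h_1,(f,\xi))\in\calW$. I would read $\calS(x,p)=h$ as the optimality system of the perturbed problem
\begin{equation*}
    \min\; J(x)+\langle h_1,x\rangle_\calX+\tfrac{\alpha}{2}\|u\|_\calU^2 \quad\text{s.t.}\quad \calA x-\calB u=\stack{f}{\xi},\; u\in F .
\end{equation*}
The sign of the linear term is chosen so that the first row of the optimality system is $-\calA^*p-\nabla J(x)-h_1=0$, i.e. the first row of $\calS(x,p)=h$. The cost $J+\langle h_1,\cdot\rangle$ is still $\mu$-strongly convex and $L$-smooth, and $\calA^{-1}$ from Lemma~\ref{lem:calA} handles a general right-hand side. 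So the proof of Theorem~\ref{thm:existence_optimal_sol} should carry over verbatim and give a saddle point $(x,p)\in\dom{\calA}\times\dom{\calA^*}$ solving $\calS(x,p)=h$.

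This transfer is the step I expect to be the main obstacle: one must check that the saddle point existence from \cite{BausComb2011} and the domain membership $p\in\dom{\calA^*}$ survive the perturbation. As a fallback, I would use maximal monotone theory. The skew operator is skew-adjoint because $\calA$ is closed and densely defined, hence maximally monotone. The nonlinear part is monotone and Lipschitz on all of $\calW$, so the sum is maximally monotone. Surjectivity would then follow from the coercivity implied by the estimate in Step 3.

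\textbf{Step 3: injectivity and Lipschitz continuity.} Let $\calS(w)=h$ and $\calS(w')=h'$, and write $\delta h=h-h'$. Step 1 gives
\begin{equation*}
    \mu\|\delta x\|_\calX^2\leq \|\delta h\|_\calW\,\|\delta w\|_\calW .
\end{equation*}
To control $\delta p$ I use the first row: $\calA^*\delta p=-\delta h_1-(\nabla J(x)-\nabla J(x'))$. Since $\calA$ has a bounded inverse, so does $\calA^*$, with $(\calA^*)^{-1}=(\calA^{-1})^*$. This gives
\begin{equation*}
    \|\delta p\|\leq \|\calA^{-1}\|\bigl(\|\delta h\|_\calW+L\|\delta x\|_\calX\bigr).
\end{equation*}
Combining the two bounds yields
\begin{equation*}
    \|\delta w\|_\calW^2\leq C\bigl(\|\delta h\|_\calW^2+\tfrac{1}{\mu}\|\delta h\|_\calW\|\delta w\|_\calW\bigr).
\end{equation*}
Young's inequality then gives $\|\delta w\|_\calW\leq L_\calS\|\delta h\|_\calW$. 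Taking $h=h'$ gives injectivity, so $\calS$ is bijective and $\calS^{-1}$ is Lipschitz with constant $L_\calS$.
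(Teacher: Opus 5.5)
Your Steps 1 and 3 coincide with the paper's argument. Monotonicity gives $\mu\|x_1-x_2\|_\calX^2\le\|h_1-h_2\|_\calW\|w_1-w_2\|_\calW$. The dual difference is controlled through $\calA^{-*}$ and the Lipschitz continuity of $\nabla J$, which the paper justifies via Baillon--Haddad. Young's inequality then closes the estimate. The genuine difference is surjectivity.

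For a right-hand side $(f,g)\in\calX\times\calZ$, the paper works on the dual side. It minimizes $\Phi(p)=J^*(-f-\calA^*p)+\alpha(\tfrac12\|\cdot\|_\calU^2+i_F)^*(\tfrac1\alpha\calB^*p)+\langle g,p\rangle_\calZ$ over $\dom{\calA^*}$. Strong convexity comes from the $\tfrac1L$-strong convexity of $J^*$, so $L$-smoothness is already needed for existence, and minimization over $\dom{\calA^*}$ implicitly takes place in the graph norm of $\calA^*$. The paper then recovers $x=\nabla J^*(-f-\calA^*p)$.

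You work on the primal side, with the OCP whose cost is tilted by $\langle h_1,\cdot\rangle_\calX$ and whose dynamics and initial value are perturbed. Your signs are correct. This route is more elementary: existence of the reduced minimizer uses only convexity and continuity of $J$ and $\alpha>0$. It also exhibits $\calS^{-1}(h)$ as the primal--dual solution of a perturbed OCP.

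The step you flag as the main obstacle disappears if you bypass the saddle-point theorem and define the multiplier by the adjoint equation, $p\coloneqq-\calA^{-*}(\nabla J(x)+h_1)\in\dom{\calA^*}$. Then:
\begin{itemize}
\item the first row of $\calS(x,p)=h$ holds by construction;
\item the gradient of the reduced cost is $\alpha u-\calB^*p$, so the variational inequality over $F$ gives $u=P_F(\tfrac1\alpha\calB^*p)$;
\item the state equation gives the second row.
\end{itemize}

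Your fallback needs a small repair. The Step 3 estimate is not Browder--Minty coercivity, since $\langle-\calS(w),w\rangle_\calW\ge\mu\|x\|_\calX^2$ does not control $p$. Instead, solve $(\lambda I-\calS)w_\lambda=-h$, which is possible by the maximal monotonicity you establish. The Step 3 bound holds uniformly for $\lambda\in(0,1]$, so $(w_\lambda)$ is bounded and $\calS(w_\lambda)=h+\lambda w_\lambda\to h$. The Lipschitz bound on the range then makes $(w_\lambda)$ Cauchy, and closedness of $\calS=\calJ-\calR$ gives $\calS(w)=h$ in the limit $\lambda\downarrow0$.
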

\begin{proof}
Let $(x_1,p_1), (x_2,p_2) \in \dom{\calA} \times \dom{\calA^*}$ fulfill
\begin{align*}
    0= \calS (x_1,p_1) - \calS (x_2,p_2) = \begin{bmatrix}
           0 & -\calA^* \vphantom{\nabla_xJ(x)}\\ \calA & 0 \vphantom{\left(\tfrac{1}{\alpha}\right)}
       \end{bmatrix}
       \begin{bmatrix}
         x_1-x_2  \vphantom{\nabla_xJ(x)} \\    p_1-p_2  \vphantom{\left(\tfrac{1}{\alpha}\right)}
    \end{bmatrix} - \begin{bmatrix}
        \nabla J(x_1)-\nabla J(x_2) \\
        \calB P_F\left(\tfrac{1}{\alpha} \calB^* p_1\right) - \calB P_F\left(\tfrac{1}{\alpha} \calB^* p_2\right) 
    \end{bmatrix}.
\end{align*}
Testing the above with $\stack{x_1-x_2}{p_1-p_2}$ and using skew-adjointness of $\begin{smallbmatrix}
            & -\calA^* \\ \calA & 
       \end{smallbmatrix}$
       yields
\begin{align*}
    0=- \left\langle \begin{bmatrix}
        \nabla J(x_1)-\nabla J(x_2) \\
        \calB P_F\left(\tfrac{1}{\alpha} \calB^* p_1\right) - \calB P_F\left(\tfrac{1}{\alpha} \calB^* p_2\right) 
    \end{bmatrix}, \stack{x_1-x_2\vphantom{\nabla J(x_1)-\nabla J(x_2)}}{p_1-p_2\vphantom{ \calB P_F\left(\tfrac{1}{\alpha} \calB^* p_2\right)}} \right\rangle_\calW 
    \leq - \mu \| x_1 -x_2\|_\calX^2 \leq 0
\end{align*}
which proves $x_1=x_2$. Bijectivity of $\calA^*$ yields that $p_1=p_2$. This proves injectivity of $\calS$. To show surjectivity, let $(f,g) \in \calX \times \calZ$ be arbitrary. We show that there exists $(x^\star,p^\star)\in \dom{\calA} \times \dom{\calA^*}$ satisfying
\begin{align}\label{eq:system-second}
\calS(x,p) = (f,g).
\end{align}
To this end, define the functional 
\begin{equation*}
     \Phi(p) \coloneqq J^*(-f-\calA^*p)+ \alpha( \tfrac{1}{2} \| \cdot \|_{\calU}^2 + i_F)^*(\tfrac{1}{\alpha}\calB^* p)+\langle g,p\rangle_\calZ
\end{equation*}
and introduce the perturbed \emph{dual problem}
\begin{equation}\label{eq:pert_dual}
    \min_{p \in \dom{\calA^*}}\Phi(p). 
\end{equation}
Since $J$ is $L$-smooth, its Fenchel conjugate $J^*$ is $\tfrac{1}{L}$-strongly convex, \cite{BausComb2011}.
Hence, for $p_1,p_2\in \dom{\calA^*}$,
the mapping
\begin{equation*}
p\mapsto J^*(-f-\calA^*p)\eqqcolon \varphi(p)
\end{equation*}
is strongly convex because
\begin{align*}
    \varphi(p_1) &\geq \varphi(p_2) + \langle \nabla J^*(-f-\calA^*p_2), -\calA^*(p_1-p_2) \rangle_\calX   + \tfrac{1}{2L} \| \calA^*(p_1-p_2) \|_\calX^2 \\
     & \geq \varphi(p_2) + \langle \nabla \varphi(p_2), p_1-p_2 \rangle_\calZ  + \tfrac{1}{2L \|\calA^{-1} \|^2} \| p_1-p_2 \|_\calZ^2 .
\end{align*}
Because the Fenchel conjugate of a function is always convex, \cite[Prop. 13.13]{BausComb2011}, and also the linear term $p \mapsto \langle g,p\rangle_\calZ$ is convex, \eqref{eq:pert_dual} is a minimization problem governed by a strongly convex functional. Hence, there exists a unique global solution $p^\star \in \dom{\calA^*}$ of \eqref{eq:pert_dual}. Following the proof of Theorem~\ref{thm:existence_optimal_sol}, $p^\star$ fulfills the first order optimality condition 
\begin{equation*}
0 = \nabla \Phi(p^\star) = -\calA(\nabla J^*(-f-\calA^* p^\star)) + \calB P_F(\tfrac{1}{\alpha} \calB^* p^\star) +g.
\end{equation*}
Because $J$ is $\mu$-strongly convex, $\nabla J$ is bijective and
\begin{equation*}
(\nabla J)^{-1}=\nabla J^*,
\end{equation*}
\cite{BausComb2011}. Thus, \eqref{eq:system-second} is satisfied with 
\begin{equation*}
    x^\star = \nabla J^*(-f-\calA^* p^\star).
\end{equation*}
This proves the first claim since $(f,g)\in \calX \times \calZ$ was arbitrary. It is directly clear from $0 \in F$, see \eqref{eq:F}, that $P_F(0)=0$. Moreover, $\ell \geq 0$ and $\ell(0)=0$ imply that $0$ minimizes $J$, whence $\nabla J(0)=0$. Thus, $\calS(0)=0$.

To prove the Lipschitz continuity, let $ w_1= \stack{x_1}{p_1}, w_2 =  \stack{x_2}{p_2} \in \dom \calS$
    solve 
    \begin{equation*}
        \calS (w_1) = \stack{f_1}{g_1}\eqqcolon h_1,  \quad \calS (w_2) = \stack{f_2}{g_2}\eqqcolon h_2.  
    \end{equation*}
    We show that there exists $L_{\calS}> 0$ such that 
\begin{equation*}
     D \coloneqq \| w_1-w_2 \|_\calW\leq L_{\calS} \| h_1-h_2 \|_\calW \eqqcolon L_{\calS}  r .
\end{equation*}
Note that
\begin{align*}
    \| \calA^{-*} (\nabla J(x_1)-\nabla J(x_2)) \| \leq \| \calA^{-1} \| \| \nabla J(x_1)-\nabla J(x_2) \|_\calX 
    \leq L  \| \calA^{-1} \| \|  x_1-x_2 \|_\calX,
\end{align*}
 where the Lipschitz continuity of $\nabla J$ is a direct consequence of the \emph{Baillon–Haddad theorem} \cite[Cor. 18.17]{BausComb2011} and Assumption \ref{ass:regularity_of_state_cost}. We conclude
 \begin{align*}
     D&\leq \left| \left| \stack{x_1-x_2}{\calA^{-*} (\nabla J(x_1)-\nabla J(x_2))}\right|\right|_\calW +  \left| \left| \stack{0}{\calA^{-*}(f_1-f_2)}\right|\right|_\calW \\
     & \leq \sqrt{1 + L^2\| \calA^{-1} \|^2 } \|  x_1-x_2 \|_\calX + \| \calA^{-1} \| r.
 \end{align*}
 The $\mu$-strong convexity of $J$ directly gives
 \begin{align*}
     \mu \|  x_1-x_2 \|_\calX^2 & \leq - \langle \calS(w_1)-\calS(w_2) , w_1-w_2 \rangle_\calW 
     \leq \|\calS(w_1)-\calS(w_2) \|_\calW \| w_1-w_2 \|_\calW =rD
 \end{align*}
 and with that: $\|  x_1-x_2 \|_\calX \leq \tfrac{1}{\sqrt{\mu}}\sqrt{rD}$. Hence, 
 \begin{equation*}
     D \leq \sqrt{\tfrac{1 + L^2\| \calA^{-1} \|^2}{\mu}} \sqrt{rD} + \| \calA^{-1} \| r \eqqcolon c \sqrt{rD} + \| \calA^{-1} \| r.
 \end{equation*}
 Now, \emph{Young's inequality} yields $c\sqrt{r} \sqrt{D} \leq \tfrac{c^2}{2}r+\tfrac{1}{2}D$. By a combination of the above equations we get
\begin{equation*}
    D \leq (\tfrac{c^2}{2}+\| \calA^{-1} \|) r+\tfrac{1}{2}D,
\end{equation*}
which proves the claim with $L_\calS = c^2+2\| \calA^{-1} \|$.
\end{proof}
\blk
\section{Well-posedness and stability of open-loop primal-dual gradient dynamics} \label{sec:open_loop}
\noindent In this part, we formulate a primal-dual gradient method to iteratively solve the optimal control problem~\eqref{eq:OCP} or equivalently the optimality conditions \eqref{eq:optsys}. Abstractly, performing gradient ascent in the dual variable and gradient descent in the primal variable of the {control-reduced} Lagrangian $\calL_\mathrm{r}$ as in \eqref{eq:control_red_Lagrangian}, we introduce the optimizer dynamics
 \begin{align}\label{eq:graddyn}
   \begin{bmatrix}
        \hspace{-0.3mm}\dot x(t)  \vphantom{\nabla_xJ(x(t))}\hspace{-0.3mm} \\ \hspace{-0.3mm} \dot  p(t) \hspace{-0.3mm} \vphantom{\left(\tfrac{1}{\alpha}\right)}
    \end{bmatrix} \hspace{-0.95mm}  =\hspace{-0.95mm}    
    \underbrace{\begin{bmatrix}
           0 & -\calA^* \vphantom{\nabla_xJ(x(t))}\\ \calA & 0 \vphantom{\left(\tfrac{1}{\alpha}\right)}
       \end{bmatrix}}_{\eqqcolon \calJ }\hspace{-0.95mm}   
       \begin{bmatrix}
        \hspace{-0.3mm} x(t)  \vphantom{\nabla_xJ(x(t))}\hspace{-0.3mm} \\ \hspace{-0.3mm}   p(t) \hspace{-0.3mm} \vphantom{\left(\tfrac{1}{\alpha}\right)}
    \end{bmatrix} \hspace{-0.95mm}   - \hspace{-0.95mm}    \underbrace{\begin{bmatrix}
        \nabla J(x(t)) \\
       \hspace{-0.3mm} \calB P_F\left(\tfrac{1}{\alpha} \calB^* p(t)\right) \hspace{-0.3mm}
    \end{bmatrix}}_{\eqqcolon \calR(x(t),p(t))}\hspace{-0.95mm}   - \hspace{-0.95mm}   
    \begin{bmatrix}
        0 \vphantom{\nabla J(x^\star) } \\ \stack{0}{x_0} \hspace{-0.3mm}\vphantom{\left(\tfrac{1}{\alpha}\right)}
    \end{bmatrix},
\end{align}
where $x_0 \in \R^n$ is the fixed initial value of the dynamical system in \eqref{eq:OCPb}, see \ref{ass:init}. Note that 
\begin{equation*}
    \calJ : \calW \supset \dom{\calA} \times \dom{\calA^*} \to \calW, \qquad \calR : \calW \to \calW
\end{equation*}
and observe that $$ \calS = \calJ - \calR.$$

Before proving a stability result, we briefly mention that finite-dimensional saddle-point dynamics occurring in optimal control have been analyzed in the works \cite{cherukuri2017saddle,CherMall2016}. Here, as we consider the continuous-time optimal control problem, the dynamics are Hilbert space valued. Define the Hamiltonian \begin{equation*}
    H(w) =\tfrac12\|w\|_{\mathcal W}^2, \qquad w \in \calW.
\end{equation*} Then, the associated \emph{Bregman divergence} $H_{w^\star}: \calW \to \R$ is
\begin{equation}\label{eq:Lyapunov}
   H_{w^\star}(w) \coloneqq H(w) - H(w^\star) - \langle \nabla H(w^\star),w-w^\star \rangle_\calW,
\end{equation}
see \cite[Sec. 8.1]{CamSch23}, and note that, as $H$ is quadratic, the Bregman divergence simplifies to the \emph{distance to equilibrium}
 \begin{equation*}
        H_{w^\star}(w) = \tfrac{1}{2} \| w- w^\star \|_\calW^2 = H(w-w^\star).
    \end{equation*}
In what follows, we provide several properties of the saddle-point dynamics \eqref{eq:graddyn}. 
\begin{thm}\label{thm:well_posed_graddyn}
The dynamics \eqref{eq:graddyn}, that is, the system generated by $\left(H, \calJ, \calR\right)$, {defines} an autonomous incrementally monotone pH system. Moreover, for every initial condition $w_0 \in\mathcal W$, there exists a unique mild solution to \eqref{eq:graddyn}.

The system admits a unique steady state $w^\star=(x^\star,p^\star)\in \calW$, given by the unique solution of the optimality system \eqref{eq:optsys}. Furthermore, the corresponding solution satisfies \vspace{-1mm}
\begin{equation}\label{eq:energy_balance}
   H_{w^\star}(w(t)) \leq H_{w^\star}(w_0), \qquad t \ge 0. \vspace{1mm}
\end{equation}
\end{thm}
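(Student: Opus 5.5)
Our plan is to verify the structural properties first, then obtain existence by semigroup/monotone operator theory, and finally derive \eqref{eq:energy_balance} from incremental monotonicity.

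\emph{Structure.} With $H(w)=\tfrac12\|w\|_\calW^2$ we have $\nabla H=I$, so \eqref{eq:graddyn} reads $\dot w=\calJ(\nabla H(w))-\calR(\nabla H(w))+\calB_0$ with the constant input term $\calB_0=-(0,\stack{0}{x_0})$, i.e.\ an autonomous system of the form in Definition~\ref{def:mono_phs}. $H$ is $1$-strongly convex. Lemma~\ref{lem:calA} gives $\calA^*$ as the adjoint of $\calA$ (and $\calA$ is closed with bounded inverse), hence the block operator $\calJ$ with domain $\dom{\calA}\times\dom{\calA^*}$ is skew-symmetric. To get skew-adjointness, we check that $\calJ^*\supseteq-\calJ$ has no larger domain. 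This follows from $I\pm\calJ$ being surjective: since $\calA$ and $\calA^*$ are boundedly invertible, the equation $(I\pm\calJ)w=h$ can be reduced to a Schur-complement problem $x+\calA^*\calA x=\dots$, which is solvable because $\calA^*\calA$ is self-adjoint and nonnegative. For $\calR$, monotonicity of $\nabla J$ follows from Assumption~\ref{ass:regularity_of_state_cost}. Furthermore, $p\mapsto\calB P_F(\tfrac1\alpha\calB^*p)$ is monotone and Lipschitz, because $\langle\calB P_F(\tfrac1\alpha\calB^*p_1)-\calB P_F(\tfrac1\alpha\calB^*p_2),p_1-p_2\rangle=\alpha\langle P_F(v_1)-P_F(v_2),v_1-v_2\rangle\ge0$ by firm nonexpansiveness of $P_F$. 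Thus $\calR$ is continuous and monotone, and accretive since $\calR(0)=0$ (shown in Theorem~\ref{thm:bijectivity}). Hence the system is incrementally monotone pH.

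\emph{Well-posedness.} By Stone's theorem, $\calJ$ generates a unitary $C_0$-group. The perturbation $-\calR-\calB_0$ is globally Lipschitz on $\calW$: $\nabla J$ is $L$-Lipschitz (Baillon--Haddad), and $P_F$ is nonexpansive with $\calB$ bounded. The standard result for semilinear equations with globally Lipschitz nonlinearities \cite[Ch.~11]{CurtainZwart2020} then gives a unique global mild solution for every $w_0\in\calW$. Alternatively, $-\calJ+\calR$ is maximally monotone as a skew-adjoint operator plus a Lipschitz monotone operator, and Kōmura--Brezis theory applies.

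\emph{Steady state and dissipation.} A steady state satisfies $\calS(w)=(0,\stack{0}{x_0})$. By Theorem~\ref{thm:bijectivity} (or Theorem~\ref{thm:existence_optimal_sol}) this has exactly one solution $w^\star$, namely the optimality system \eqref{eq:optsys}. For \eqref{eq:energy_balance}, we first take $w_0\in\dom{\calJ}$, where the mild solution is classical; this needs a regularity argument that holds for Lipschitz perturbations of group generators, at least giving Lipschitz-in-time strong solutions. Subtracting the steady-state equation gives
\begin{equation*}
\tfrac{\mathrm d}{\mathrm dt}H_{w^\star}(w(t))=\langle \calJ(w-w^\star),w-w^\star\rangle-\langle\calR(w)-\calR(w^\star),w-w^\star\rangle\le0.
\end{equation*}
The general case then follows by density of $\dom{\calJ}$ and continuous dependence of mild solutions on initial data (Gronwall with the Lipschitz constant of $\calR$). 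We expect the main obstacle to be this regularity/approximation step together with the rigorous skew-adjointness of $\calJ$ on the stated domain.
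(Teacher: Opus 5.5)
Your proposal is correct. For the structure, well-posedness and steady-state parts you follow the paper essentially step by step: monotonicity of $\calR$ via firm nonexpansiveness of $P_F$, a global Lipschitz bound via Baillon--Haddad, the unitary group generated by $\calJ$, \cite[Thm.~11.1.5]{CurtainZwart2020}, and uniqueness of the steady state from the optimality system.

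The genuine difference is in the proof of \eqref{eq:energy_balance}. The paper gets $\|w(t)-w^\star\|_\calW\le\|w_0-w^\star\|_\calW$ in one line from the contraction property of the nonlinear semigroup generated by a maximal monotone operator \cite[Prop.~4.2]{Barb10}, compared against the constant trajectory $w(\cdot,w^\star)\equiv w^\star$. This is short and gives contractivity between any two trajectories at once. However, it implicitly relies on two facts: maximal monotonicity of the full generator $-\calJ+\calR$ (not only of $\calR$), and the identification of the Curtain--Zwart mild solution with the semigroup solution from monotone operator theory. You instead differentiate $\tfrac12\|w-w^\star\|_\calW^2$ along solutions with $w_0\in\dom{\calJ}$ and conclude by density and Lipschitz dependence on initial data. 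This is more self-contained and, by the same computation, also yields pairwise contraction. The regularity step you flag is available in Hilbert spaces: for a globally Lipschitz nonlinearity and $w_0\in\dom{\calJ}$, the mild solution is Lipschitz in time and a strong solution, by reflexivity. A Lipschitz function with a.e.\ nonpositive derivative is nonincreasing, so your argument closes.

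On skew-adjointness of $\calJ$, which the paper simply takes for granted, your Schur-complement reduction $x+\calA^*\calA x=\dots$ needs care. The natural right-hand side contains $\calA^*g$ with $g\in\calZ$, which need not lie in $\dom{\calA^*}$. There are two ways to fix this:
\begin{enumerate}
\item Solve weakly: apply Lax--Milgram on $\dom{\calA}$ with the graph inner product, then recover $p$ from the second equation; it automatically lies in $\dom{\calA^*}$.
\item Compute the adjoint of the off-diagonal block operator entrywise: $\calJ^*=\begin{smallbmatrix}0&\calA^*\\-\calA^{**}&0\end{smallbmatrix}=-\calJ$ on $\dom{\calA}\times\dom{\calA^*}$, since $\calA$ is closed and densely defined.
\end{enumerate}
The second is more direct.
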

\begin{proof}
The constant term $\stack{0}{x_0}$ in \eqref{eq:graddyn} affects neither the monotonicity nor the Lipschitz estimates below, so we may set $x_0=0$ in the following computations. First, note that for $w_i=(x_i,p_i) \in \calW$, $i=1,2$:
\begin{align*}
    \langle \calR(w_1) - \calR(w_2), w_1 -w_2 \rangle_\calW  & \overset{\hphantom{(\triangledown)}}{=} \langle \nabla J(x_1) - \nabla J(x_2), x_1 -x_2 \rangle_\calX + \langle \calB P_F(\tfrac{1}{\alpha} \calB^* p_1) -  \calB P_F(\tfrac{1}{\alpha} \calB^* p_2), p_1 -p_2 \rangle_{\calZ} \\
    & \overset{(\triangledown)}{\geq} \alpha  \langle  P_F(\tfrac{1}{\alpha} \calB^* p_1) -   P_F(\tfrac{1}{\alpha} \calB^* p_2), \tfrac{1}{\alpha} \calB^* p_1 -\tfrac{1}{\alpha} \calB^*p_2 \rangle_{\calU} \\
    & \overset{(\blacktriangledown)}{\geq} \alpha  \|   P_F(\tfrac{1}{\alpha} \calB^* p_1) -   P_F(\tfrac{1}{\alpha} \calB^* p_2)\|_{\calU}^2 \geq 0,
\end{align*}
where we used convexity of the cost functional $J$ in $(\triangledown)$ and \emph{firm nonexpansiveness}\footnote{An operator $T: X \to X$ is \emph{firmly nonexpansive} if \vspace{-1mm}
\begin{equation*}
     \langle T(x)-T(y),x-y\rangle \geq \| T(x)-T(y)\|^2, \qquad \forall \, x,y \in X. \vspace{-0.4cm}
 \end{equation*}} of the projection operator $P_F: \calU \to \calU$ in $(\blacktriangledown)$, see \cite[Prop. 4.16]{BausComb2011}. This shows that $\calR$ is monotone. Since $\nabla J$ is a subgradient of a proper\footnote{A function $\phi: \calU \to \R \cup \{+\infty\}$ is called \emph{proper} if \vspace{-1mm}
    \begin{equation*} 
        \phi(x)\neq -\infty \, \text{for all } x\in \calX \qquad \text{and} \qquad \domain{\phi} \neq \emptyset. \vspace{-1mm}
    \end{equation*}
    \hspace{.3cm} with the \textit{effective domain} $
             \domain{\phi}\coloneqq \left\{u \in \calU \, \middle| \, \phi(u) < + \infty \right\}.
         $} convex lower semicontinuous
function and $p \mapsto \calB P_F\left(\tfrac{1}{\alpha} \calB^* p\right)$ is continuous and monotone, $\calR$ is a maximal monotone operator on $\calW$. Hence, $\left(H, \calJ, \calR\right)$ defines an autonomous incrementally monotone pHs. The skew-adjointness (and linearity) of $\calJ$ yields that $\calJ$ is the generator of a unitary $C_0$-group on $\calW$. Moreover, observe   \begin{align}\label{eq:Lipschitz_bound_Lyapunov}
        \| \calR(w_1)-\calR(w_2)\|_\calW  & \leq\hspace{-0.25mm}\| \hspace{-0.25mm}\nabla J(x_1)\hspace{-0.35mm}-\hspace{-0.35mm}\nabla J(x_2)\hspace{-0.25mm}\|_\calX \hspace{-0.25mm} + \hspace{-0.25mm}\| \hspace{-0.25mm}\calB P_F(\tfrac{1}{\alpha} \calB^*p_1) \hspace{-0.35mm}-\hspace{-0.35mm} \calB P_F(\tfrac{1}{\alpha}  \calB^*p_2) \hspace{-0.25mm}\|_\calZ \notag \\
        & \leq L \| x_1-x_2\|_\calX + \tfrac{1}{\alpha} \| \calB \|^2 \| p_1-p_2\|_\calZ
    \end{align}
    where we used a combination of Assumption \ref{ass:regularity_of_state_cost}, the \emph{Baillon–Haddad theorem} \cite[Cor. 18.17]{BausComb2011}, firm nonexpansiveness of the projection operator $P_F$ and boundedness of $\calB$. Thus, $\calR$ is uniformly Lipschitz continuous. By \cite[Thm. 11.1.5]{CurtainZwart2020}, for each initial value $w_0 \in \calW$, there exists a unique mild solution $w = w(\cdot, w_0) \in C([0, \infty); \calW)$ to $\left(H, \calJ, \calR\right)$. It is clear from Theorem \ref{thm:existence_optimal_sol} that the unique steady state of \eqref{eq:graddyn} is given by the unique solution $w^\star =(x^\star, p^\star)$ of \eqref{eq:optsys}. In particular, from \cite[Prop. 4.2]{Barb10} and the fact that $w(t,w^\star) \equiv w^\star$ we conclude
\begin{equation*}
    \| w(t) - w^\star \|_\calW \leq  \| w_0 - w^\star \|_\calW, \qquad t \geq 0,
\end{equation*}
and thus \eqref{eq:energy_balance}.
\end{proof}
\blk 
\subsection{Global exponential stability of primal-dual gradient dynamics}\label{subsec:exp_stab}
 \noindent In what follows, we study the asymptotic behavior of the unique mild solution of \eqref{eq:graddyn}. In the case without control constraints, i.e., if $F=\calU$, and $\mu$-strongly convex state costs $J$, the asymptotic convergence of \eqref{eq:graddyn} towards the optimal point as in Theorem \ref{thm:existence_optimal_sol} was shown in \cite[Cor. 3.5]{gernandt2025port}. We now extend this result by including inequality constraints and further improve the stability assertion by showing global uniform exponential stability of \eqref{eq:graddyn} towards the unique solution of \eqref{eq:optsys}. To this end, we leverage a Lyapunov function strongly inspired by hypocoercivity theory  \cite{villani2009hypocoercivity,achleitner2025hypocoercivity}. To this end, consider the semilinear homogeneous initial value problem
\begin{equation}\label{eq:mono_cauchy}
    \tfrac{\mathrm{d}}{\mathrm{d}t} z(t) = \frakA z(t) + f(z(t)), \qquad z(0)=z_0,
\end{equation}
where $\frakA: Z \supset \dom{\frakA} \to Z$ is the infinitesimal generator of a $C_0$-semigroup on the Hilbert space $Z$ and $f: Z \to Z$ is a possibly nonlinear uniformly Lipschitz map. By a simple shift of the state space, we assume without loss of generality that $z^\star=0$ is an equilibrium of \eqref{eq:mono_cauchy} and $f(0)=0$. By \cite[Thm. 11.1.5]{CurtainZwart2020}, for each initial value $z_0 \in Z$, there exists a unique mild solution $z = z(\cdot, z_0) \in C([0, \infty); Z)$ to \eqref{eq:mono_cauchy}. A continuous functional $V: Z \to \R_+$ is a \emph{Lyapunov functional} for \eqref{eq:mono_cauchy} on $Z$ if $V(z(t;z_0))$ is Dini differentiable at $t=0$ for all $z_0 \in Z$ and there holds
\begin{align*}
    \dot{V}_+(z_0) \coloneqq \limsup_{t \downarrow 0} \tfrac{1}{t}(V(z(t,z_0))-V(z_0)) \leq 0.
\end{align*} 
\begin{thm}\label{thm:main1}
     Let $w^\star = (x^\star,p^\star)$ be the unique solution of the optimality system \eqref{eq:optsys} and let Assumption \ref{ass:regularity_of_state_cost} be fulfilled. Then, $w^\star$ is globally uniformly exponentially stable for \eqref{eq:graddyn}, i.e., there exist $M \geq 1$ and $\eta_\mathrm{opt}> 0$ such that for each initial condition $w_0 \in \calW$:
    \begin{align*}
        \| w(t,w_0) - w^\star \|_\calW \leq M e^{-\eta_\mathrm{opt} t}\| w_0 - w^\star \|_\calW, \qquad t \geq 0.
    \end{align*}
\end{thm}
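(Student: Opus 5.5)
We construct a hypocoercive Lyapunov functional: the squared distance to equilibrium plus a small cross term that couples primal and dual errors through $\calA^{-1}$. Shifting coordinates by $w^\star$, set $e=(\tilde x,\tilde p)=(x-x^\star,p-p^\star)$. Since $\calS(w^\star)$ cancels the constant term in \eqref{eq:graddyn}, the error dynamics read
\begin{equation*}
\dot{\tilde x} = -\calA^*\tilde p - \bigl(\nabla J(x)-\nabla J(x^\star)\bigr), \qquad
\dot{\tilde p} = \calA\tilde x - \bigl(\calB P_F(\tfrac1\alpha\calB^*p)-\calB P_F(\tfrac1\alpha\calB^*p^\star)\bigr).
\end{equation*}
Write $r_x:=\nabla J(x)-\nabla J(x^\star)$ and $r_p:=\calB P_F(\tfrac1\alpha\calB^*p)-\calB P_F(\tfrac1\alpha\calB^*p^\star)$. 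By Assumption~\ref{ass:regularity_of_state_cost}, $\langle r_x,\tilde x\rangle\ge\mu\|\tilde x\|^2$ and $\|r_x\|\le L\|\tilde x\|$. By \eqref{eq:Lipschitz_bound_Lyapunov} and firm nonexpansiveness, $\langle r_p,\tilde p\rangle\ge0$ and $\|r_p\|\le\tfrac1\alpha\|\calB\|^2\|\tilde p\|$.

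The energy $H(e)=\tfrac12\|e\|_\calW^2$ alone only satisfies $\tfrac{\mathrm d}{\mathrm dt}H\le-\mu\|\tilde x\|^2$, which gives no decay in $\tilde p$. This is the hypocoercive gap. To close it, take
\begin{equation*}
V(e)=\tfrac12\|e\|_\calW^2+\delta\,\langle \calA^{-*}\tilde x,\tilde p\rangle_\calZ ,\qquad \calA^{-*}:=(\calA^{-1})^*=(\calA^*)^{-1},
\end{equation*}
with $\delta>0$ small. Because $\calA^{-1}$ is bounded by Lemma~\ref{lem:calA}, choosing $\delta\|\calA^{-1}\|\le\tfrac12$ makes $V$ equivalent to $\tfrac12\|e\|^2$, with constants $\tfrac14$ and $\tfrac34$.

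Next compute $\dot V$ along classical solutions, i.e.\ for $w_0\in\dom{\calA}\times\dom{\calA^*}$. The cross term contributes
\begin{equation*}
\delta\langle \calA^{-*}\dot{\tilde x},\tilde p\rangle+\delta\langle\calA^{-*}\tilde x,\dot{\tilde p}\rangle
=\delta\bigl(-\|\tilde p\|^2-\langle \calA^{-*}r_x,\tilde p\rangle+\langle\calA^{-*}\tilde x,\calA\tilde x\rangle-\langle\calA^{-*}\tilde x,r_p\rangle\bigr),
\end{equation*}
using $\calA^{-*}\calA^*\tilde p=\tilde p$. The term $\langle\calA^{-*}\tilde x,\calA\tilde x\rangle$ needs care because $\calA$ maps into $\calZ$ and $\calA^{-*}$ is the adjoint of $\calA^{-1}:\calZ\to\calX$. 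Instead of interpreting it formally, I would bound it by $\|\calA^{-1}\|\,\|\calA\tilde x\|\,\|\tilde x\|$, but $\calA$ is unbounded, so that bound is useless. The fix is to place the weight on the other side: use the cross term $\delta\langle\tilde x,\calA^{-1}\tilde p\rangle_\calX$ (equivalently $\delta\langle \calA^{-*}\tilde x,\tilde p\rangle$) and differentiate $\calA^{-1}\tilde p$. Then $\calA^{-1}\dot{\tilde p}=\tilde x-\calA^{-1}r_p$, which contributes $\delta\|\tilde x\|^2-\delta\langle\tilde x,\calA^{-1}r_p\rangle$. Likewise $\langle\dot{\tilde x},\calA^{-1}\tilde p\rangle=-\langle\calA^*\tilde p,\calA^{-1}\tilde p\rangle-\langle r_x,\calA^{-1}\tilde p\rangle$. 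The first piece is again unbounded, so the sign of $\delta$ and the placement of inverses must be chosen so that exactly the coercive term $-\delta\|\tilde p\|^2$ appears. This bookkeeping is the main obstacle, and I would settle it by testing the finite-dimensional analogue $\dot{\tilde x}=-A^\top\tilde p-\dots$, $\dot{\tilde p}=A\tilde x-\dots$, where the standard choice is the cross term $-\delta\langle \tilde x, A^{-1}\tilde p\rangle$ (modulo sign). That choice gives $\tfrac{\mathrm d}{\mathrm dt}(\text{cross})=-\delta\|\tilde x\|^2\cdot(\pm)+\dots$, so I would instead pick the cross term $\delta\langle A^{\top}{}^{-1}\ldots\rangle$ according to whichever sign yields $-\delta\|\tilde p\|^2+\delta\|\tilde x\|^2$ plus bounded remainders, and then transfer that choice verbatim to $\calA$.

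Once that is fixed, the remainders $\langle \calA^{-*}r_x,\tilde p\rangle$ and $\langle\tilde x,\calA^{-1}r_p\rangle$ are bounded by $C(\|\tilde x\|\|\tilde p\|+\|\tilde p\|\,\|\tilde x\|)$ using $L$ and $\|\calB\|^2/\alpha$, and then split with Young's inequality into $\tfrac{\delta}{2}\|\tilde p\|^2+C'\delta\|\tilde x\|^2$. Altogether,
\begin{equation*}
\dot V\le-(\mu-\delta C'')\|\tilde x\|^2-\tfrac\delta2\|\tilde p\|^2\le-2\eta_\mathrm{opt}V
\end{equation*}
once $\delta\le\mu/(2C'')$. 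Gr\"onwall and the norm equivalence then give the estimate with $M=\sqrt3$.

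Finally, extend from classical to mild solutions by density of $\dom{\calA}\times\dom{\calA^*}$ together with continuous dependence of mild solutions on initial data, which holds because $\calR$ is globally Lipschitz \cite[Thm.~11.1.5]{CurtainZwart2020}. The bound passes to the limit since $V$ is continuous.
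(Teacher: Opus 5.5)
This is essentially the paper's proof: your $V$ coincides with the paper's $W_\rho$ from \eqref{eq:Wrho} with $\rho=\delta$, since $\langle\calA^{-*}\tilde x,\tilde p\rangle_\calZ=\langle\tilde x,\calA^{-1}\tilde p\rangle_\calX$, and your Young's-inequality step replaces the paper's Sylvester test on the $2\times 2$ matrix $\Lambda_\rho$. The ``main obstacle'' you flag is not real: for $\tilde x\in\dom{\calA}$ and $\tilde p\in\dom{\calA^*}$, the definition of the adjoint of the bounded operator $\calA^{-1}$ gives $\langle\calA^{-*}\tilde x,\calA\tilde x\rangle_\calZ=\langle\tilde x,\calA^{-1}\calA\tilde x\rangle_\calX=\|\tilde x\|_\calX^2$ and $\langle\calA^*\tilde p,\calA^{-1}\tilde p\rangle_\calX=\langle\tilde p,\calA\calA^{-1}\tilde p\rangle_\calZ=\|\tilde p\|_\calZ^2$, so your original functional with $\delta>0$ directly yields $\delta(\|\tilde x\|_\calX^2-\|\tilde p\|_\calZ^2)$ plus the bounded remainders, exactly as in the paper, with no sign search or finite-dimensional detour needed.
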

\begin{proof}
    First, we shift the state space, as outlined in \cite[Def. 11.2.1]{CurtainZwart2020}, so that the origin becomes an equilibrium of the resulting dynamical system. To this end, define $\tilde{x} \coloneqq x-x^\star, \tilde{p}\coloneqq p-p^\star, \tilde{w}\coloneqq (\tilde{x}, \tilde{p})$ and observe that
    \begin{equation}\label{eq:shifted_graddyn}
        \tfrac{\mathrm{d}}{\mathrm{d}t} \tilde{w} 
        = \calJ\tilde{w} - (\calR(\tilde{w}+w^\star) - \calR(w^\star))
    \end{equation}
    where we used that $w^\star$ fulfills the steady state equation of \eqref{eq:graddyn}. Hence, the origin is globally exponentially stable for \eqref{eq:shifted_graddyn} if and only if $w^\star$ is globally exponentially stable for \eqref{eq:graddyn}. The dynamical system \eqref{eq:shifted_graddyn} can be viewed as an instance of \eqref{eq:mono_cauchy}, where $\frakA \coloneqq \calJ$ is the generator of a unitary group on $\calW$ and
    \begin{equation*}
        f(\tilde{w}) \coloneqq - (\calR(\tilde{w}+w^\star) - \calR(w^\star)),
    \end{equation*}
    where uniform Lipschitz continuity of $f$ follows from \eqref{eq:Lipschitz_bound_Lyapunov}.

    Since $\calA: \calX \supset \calY \to \calZ$ is a closed, densely defined bijection, $\calA^{-1} :\calZ \to \calX$ is bounded. Throughout this work, we abbreviate $\| \calA^{-1} \|_{L(\calZ, \calX)}$, $\| \calB \|_{L(\calU, \calZ)}$ by $\| \calA^{-1} \|$, $\| \calB \|$, respectively. Define $W_\rho: \calW \to \R$ by 
    \begin{equation}\label{eq:Wrho}
        W_\rho(x,p) \coloneqq \tfrac{1}{2} \left \langle \begin{smallbmatrix}
            I & \rho \calA^{-1}  \\ \rho \calA^{-*} & I
        \end{smallbmatrix} \hspace{-1mm} \stack{x\vphantom{ \rho \calA^{-1}}}{p\vphantom{ \rho \calA^{-*}}}, \stack{x\vphantom{ \rho \calA^{-1}}}{p\vphantom{ \rho \calA^{-*}}}  \right\rangle_\calW.
    \end{equation}
    By the Cauchy-Schwarz inequality and boundedness of $\calA^{-1}$:
    \begin{align*}
       \rho \langle  x, \calA^{-1} p \rangle_\calX \leq \rho\| \calA^{-1} \| \| x\|_\calX\| p\|_\calZ 
        \leq \tfrac{\rho\| \calA^{-1} \|}{2}(\| x\|_\calX^2 + \| p\|_\calZ^2) 
    \end{align*}
    for all $(x,p) \in \calW$. Moreover,
    \begin{align*}
       \rho \langle x, \calA^{-1}p \rangle_\calX \geq  -\rho | \langle x, \calA^{-1}p \rangle_\calX |
        \geq -\rho\| \calA^{-1} \| \| x\|_\calX\| p\|_\calZ 
        \geq -\tfrac{\rho\| \calA^{-1} \|}{2}(\| x\|_\calX^2 + \| p\|_\calZ^2) 
    \end{align*}
    for all $(x,p) \in \calW$. Thus, for $\rho > 0$ such that $\rho \| \calA^{-1} \| < 1$ we obtain for all $w \in \calW$:
    \begin{equation}\label{eq:Lyap_function_1}
        \tfrac{1- \rho\| \calA^{-1} \|}{2} \| w \|_\calW^2 \leq W_\rho(w) \leq  \tfrac{1+ \rho\| \calA^{-1} \|}{2} \| w \|_\calW^2 .
    \end{equation}
    Clearly, $W_\rho$ is Fréchet differentiable with gradient
    \begin{equation*}
        \nabla W_\rho (w) = \begin{smallbmatrix}
            I & \rho \calA^{-1}  \\ \rho \calA^{-*} & I
        \end{smallbmatrix}w,    \qquad w \in \calW.
    \end{equation*}
    Similar to \cite[Lem. 11.2.5]{CurtainZwart2020}, for $w \in \dom{\calJ}$:
    \begin{align*}
         \langle \tilde{w}, \calJ\tilde{w} - (\calR(\tilde{w}+w^\star) - \calR(w^\star)) \rangle_\calW 
        &= - \langle w - w^\star, \calR(w) - \calR(w^\star) \rangle_\calW \\
        &= - \langle\nabla J(x) - \nabla J(x^\star), x-x^\star \rangle_{\calX} \\
        & \quad - \langle P_F(\tfrac{1}{\alpha} \calB^* p) -  P_F(\tfrac{1}{\alpha} \calB^* p^\star) , \calB^* p- \calB^* p^\star \rangle_\calU \\
        &\leq - \mu \| x-x^\star \|_{\calX}^2 - \alpha \| P_F(\tfrac{1}{\alpha} \calB^* p) -  P_F(\tfrac{1}{\alpha} \calB^* p^\star)\|_\calU^2 \\
        &\leq -  \mu \| \tilde{x} \|_{\calX}^2 .
    \end{align*}
    Observe that
    \begin{align*}
        \left \langle \begin{smallbmatrix}
            0 & \rho \calA^{-1}  \\ \rho \calA^{-*} & 0
        \end{smallbmatrix} \hspace{-1mm} \stack{\tilde{x}\vphantom{ \rho \calA^{-1}}}{\tilde{p}\vphantom{ \rho \calA^{-*}}}, f(\tilde{w}) \right\rangle_\calW 
        & =  \langle \rho \calA^{-1} \tilde{ p} , \nabla J (\tilde{x} +x^\star) - \nabla J (x^\star) \rangle_\calX  + \langle \rho \calA^{-*} \tilde{ x} , \calB P_F(\tfrac{1}{\alpha} \calB^*( \tilde{p} +p^\star)) - \calB P_F(\tfrac{1}{\alpha}  \calB^*p^\star)  \rangle_\calZ \\
        &\leq  \rho  \underbrace{(L \| \calA^{-1}\| + \tfrac{1}{\alpha}  \| \calA^{-*}\| \| \calB \|^2)}_{\eqqcolon b} \| \tilde{x} \|_\calX   \| \tilde{p} \|_\calZ   .
    \end{align*}
    Consequently,
    \begin{align*}
        \left \langle \begin{smallbmatrix}
            0 & \rho \calA^{-1}  \\ \rho \calA^{-*} & 0
        \end{smallbmatrix} \hspace{-1mm} \stack{\tilde{x}\vphantom{ \rho \calA^{-1}}}{\tilde{p}\vphantom{ \rho \calA^{-*}}}, \calJ\tilde{w} - (\calR(\tilde{w}+w^\star) - \calR(w^\star))  \right\rangle_\calW 
        &= \left \langle \begin{smallbmatrix}
            0 & \rho \calA^{-1}  \\ \rho \calA^{-*} & 0
        \end{smallbmatrix} \hspace{-1mm} \stack{\tilde{x}\vphantom{ \rho \calA^{-1}}}{\tilde{p}\vphantom{ \rho \calA^{-*}}}, \begin{smallbmatrix}
            0 & -\calA^* \vphantom{\rho \calA^{-1}} \\ \calA & 0\vphantom{\rho \calA^{-*}}
        \end{smallbmatrix} \hspace{-1mm} \stack{\tilde{x}\vphantom{ \rho \calA^{-1}}}{\tilde{p}\vphantom{ \rho \calA^{-*}}} - f(\tilde{w}) \right\rangle_\calW \\     
       &\leq \rho \| \tilde{x}\|_\calX^2 - \rho \|\tilde{p}\|_\calZ^2 + \rho b \| \tilde{x} \|_\calX   \| \tilde{p} \|_\calZ .
    \end{align*}
    Combining the above estimates, 
    \begin{align*}
        \left \langle \nabla W_\rho (\tilde{w}) , \calJ\tilde{w} - (\calR(\tilde{w}+w^\star) - \calR(w^\star))  \right\rangle_\calW  
        &\leq -( \mu -\rho) \| \tilde{x} \|_{\calX}^2 - \rho \|\tilde{p}\|_\calZ^2 + \rho b \|\tilde{x}\|_\calX\|\tilde{p}\|_\calZ  \\
        &= - \stack{\| \tilde{x} \|_\calX \vspace{0.8mm}}{\| \tilde{p} \|_\calZ}^\top \underbrace{\begin{bmatrix}
            \mu -\rho &  -\tfrac{\rho }{2} b\\ -\tfrac{\rho }{2}b & \rho 
        \end{bmatrix}}_{\eqqcolon \Lambda_\rho \in \R^{2 \times 2}} \stack{\| \tilde{x} \|_\calX \vspace{0.8mm}}{\| \tilde{p} \|_\calZ}.
    \end{align*}
    \emph{Sylvester's criterion} now yields that $\Lambda_\rho$ is positive definite iff
    \begin{equation*}
         \mu -\rho > 0 \qquad \text{and} \qquad \operatorname{det} \Lambda_\rho > 0.
    \end{equation*}
    Observe that 
    \begin{equation*}
        \operatorname{det} \Lambda_\rho = (  \mu -\rho)\rho - \rho^2 \tfrac{b^2}{4}>0 \quad \text{iff} \quad  \rho < \tfrac{\mu }{1+ \tfrac{b^2}{4}} < \mu .
    \end{equation*}
Hence, there exists a $\rho > 0$ satisfying both $\rho\|\calA^{-1}\|<1$ and $\rho < \mu/(1+\tfrac{b^2}{4})$, for which $\Lambda_\rho$ is positive definite. We abbreviate
    \begin{equation}\label{eq:mrho_Mrho}
        \begin{aligned}
        \zeta &\coloneqq \lambda_{\min}(\Lambda_\rho) =  \tfrac{\mu-\sqrt{(\mu-2\rho)^2+\rho^2 b^2}}{2}>0, \\
        m_\rho &\coloneqq \tfrac{1-\rho\|\calA^{-1}\|}{2}>0, \quad M_\rho \coloneqq \tfrac{1+\rho\|\calA^{-1}\|}{2},
        \end{aligned}
    \end{equation}
    so that \eqref{eq:Lyap_function_1} reads $m_\rho\|w\|_\calW^2 \leq W_\rho(w) \leq M_\rho \|w\|_\calW^2$ for all $w \in \calW$. A combination of \cite[Lem. 11.2.5]{CurtainZwart2020} with \eqref{eq:Lyap_function_1} yields, with $\eta \coloneqq \zeta/M_\rho>0$,
    \begin{equation}\label{eq:Lyap_function_2}
        \dot W_{\rho,+}(\tilde{w}) \leq - \zeta \| \tilde{w}\|_\calW^2 \leq - \eta\, W_\rho(\tilde{w}), \qquad \tilde w \in \calW.
    \end{equation}
    Following \cite[Thm. 11.2.7]{CurtainZwart2020}, this proves the assertion with
   \begin{equation*}
       M = \sqrt{\tfrac{M_\rho}{m_\rho}}=\sqrt{\tfrac{1+\rho\|\calA^{-1}\|}{1-\rho\|\calA^{-1}\|}}>1, \qquad \eta_\mathrm{opt} = \tfrac{\eta}{2} = \tfrac{\zeta}{1+\rho\|\calA^{-1}\|}.
   \end{equation*}
\end{proof}
\section{Closed-loop: Suboptimal MPC controller}\label{sec:CBI}
\noindent
In this section, we introduce a control-by-interconnection scheme that stabilizes the nonlinear port-Hamiltonian plant. Let the plant be given by a nonlinear port-Hamiltonian system
\begin{equation}
    \dot{x}_p = J(\nabla H(x_p)) - R(\nabla H(x_p)) + B_p u_p, \label{eq:plant}
\end{equation}
with $J$ lossless, $R$ continuous and dissipative, $B_p\in \R^{n \times m}$ and $H:\R^n\to\R$ a Hamiltonian function. Moreover, the maps $J,R, \nabla H, \kappa$ are locally Lipschitz and the input is box-constrained, $u_p(t)\in[\underline u,
\overline u]$. Define $f_p:\R^n \to \R^n$ with
\begin{equation*}
    f_p(x_p) \coloneqq  J(\nabla H(x_p)) - R(\nabla H(x_p)) .
\end{equation*} In view of Theorem \ref{thm:existence_optimal_sol}, there exists a mapping $\hat{\kappa} : \R^n \to \calU$ that assigns to each initial value $x_0$ the corresponding optimal control $u^\star = u^\star(x_0)$. Define the receding-horizon feedback as the value of the optimal control at $t=0$ by
\begin{equation*}
    \kappa(x_0) \coloneqq \hat{\kappa} (x_0)(0)=u^\star(x_0)(0).
\end{equation*}
We impose the following on the closed-loop MPC system.
\begin{assumption}\label{ass:stabilizing_controller}
Assume that there exists some $\delta>0$ such that the feedback 
\begin{equation}\label{eq:MPC_feedback}
    u_p = \kappa(x_p)
\end{equation}
renders the origin of the closed-loop system
\begin{equation}\label{eq:plant_closed_loop}
\dot{x}_p = f_p(x_p)+B_p\kappa(x_p),
\end{equation}
locally asymptotically stable, with $\calB_\delta\coloneqq\{x_p\in\R^n:
\|x_p\|_{\R^n}<\delta\}$ contained in its region of attraction. Precisely, \eqref{eq:plant_closed_loop} admits a continuously differentiable Lyapunov function $\vartheta:\calB_\delta\to\R_{\geq0}$ with $\vartheta(0)=0$ together with functions $r_1,r_p \in \calK$ (defined on $[0,\delta)$)\footnote{A continuous function $r:[0,a)\to\R_+$ is of \emph{class $\calK$}, written $r\in\calK$, if it is strictly increasing and
$r(0)=0$; if in addition $a=\infty$ and $r(s)\to\infty$ as $s\to\infty$, then $r\in\calK_\infty$.} satisfying $r_p(s)\geq\underline c\,s^2$ on $[0,\delta)$ for some $\underline c>0$, and a constant $\zeta_1>0$ such that, writing $\psi_p\coloneqq r_p(\|\cdot\|_{\R^n})$, for all $x_p\in\calB_\delta$:
\begin{subequations}\label{eq:plant_lyap}
\begin{align}
    \vartheta(x_p) &\geq r_1(\|x_p\|_{\R^n}), \label{eq:value_function_lb}\\
    \nabla\vartheta(x_p)^\top\bigl(f_p(x_p)+B_p\kappa(x_p)\bigr)
        &\leq -\psi_p(x_p), \label{eq:value_function_decay}\\
    \|B_p^\top\nabla\vartheta(x_p)\|_{\R^m}^2 &\leq \zeta_1\,\psi_p(x_p).
        \label{eq:grad_bound}
\end{align}
\end{subequations}
\end{assumption}
Denote by $\iota_3 : \R^n \to \calW$ the embedding $\iota_3 x_p = (0, (0,x_p))\top$.
\begin{rem}\label{rem:value_function}
Assume that $\vartheta$ denotes the finite-horizon value function of
\eqref{eq:OCP}. Then a standard adjoint sensitivity argument yields $\nabla\vartheta(x_0)=-\lambda_0^\star(x_0)
=-\iota_3^*\calS^{-1}(\iota_3 x_0)$, so $\nabla\vartheta(0)=0$; together with $f_p(0)+B_p\kappa(0)=0$, condition \eqref{eq:grad_bound} is a compatibility
condition tying the sensitivity of the value function \emph{in the input
directions} to the decay rate $\psi_p=r_p(\|\cdot\|_{\R^n})$: the port gradient
$B_p^\top\nabla\vartheta$ must vanish at least as fast as $\psi_p^{1/2}$ as
$x_p\to 0$.
\end{rem}
The pH structure of the plant yields the following local bound. 
\begin{lem}\label{lem:field_bound_ph}
Under Assumption~\ref{ass:stabilizing_controller} there exists $\zeta_p>0$ s.t.
\begin{equation}\label{eq:field_bound}
    \|f_p(x_p)+B_p\kappa(x_p)\|_{\R^n}^2 \leq \zeta_p\,\psi_p(x_p)
\end{equation}
for all $x_p\in\calB_\delta$.
\end{lem}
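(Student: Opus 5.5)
The bound \eqref{eq:field_bound} follows from combining a \emph{linear} local bound on the closed-loop vector field with the \emph{quadratic} lower bound $r_p(s)\geq \underline c\, s^2$ from Assumption~\ref{ass:stabilizing_controller}. Write $F(x_p)\coloneqq f_p(x_p)+B_p\kappa(x_p)$. The goal is to show that there is $L_F>0$ with
\begin{equation*}
    \|F(x_p)\|_{\R^n}\leq L_F\,\|x_p\|_{\R^n}, \qquad x_p\in\calB_\delta .
\end{equation*}
Squaring this and using $\psi_p(x_p)=r_p(\|x_p\|_{\R^n})\geq \underline c\,\|x_p\|_{\R^n}^2$ on $\calB_\delta$ then gives \eqref{eq:field_bound} with $\zeta_p\coloneqq L_F^2/\underline c$.

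\textbf{Step 1: $F(0)=0$.} This can be argued in two ways.
\begin{enumerate}[(a)]
    \item The origin is assumed to be a (locally asymptotically stable) equilibrium of \eqref{eq:plant_closed_loop}, so $F(0)=0$ directly.
    \item Alternatively, one can check it from the structure. By Theorem~\ref{thm:bijectivity}, $\calS(0)=0$, so for $x_0=0$ the unique optimal pair from Theorem~\ref{thm:existence_optimal_sol} is $(x^\star,p^\star)=0$. Then \eqref{eq:optimal_control} and $P_F(0)=0$ give $u^\star(0)\equiv 0$, hence $\kappa(0)=0$. For $f_p(0)=0$, the Hamiltonian is strongly convex, and I would use that it is minimized at the origin, so that $\nabla H(0)=0$ and $f_p(0)=J(0)-R(0)$. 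I would then use that the origin is an equilibrium of the plant with zero input (as is implicit in the pH setting and in Assumption~\ref{ass:stabilizing_controller}) to conclude $f_p(0)=0$.
\end{enumerate}
I would rely on (a), since it is guaranteed by the assumption, and mention (b) only as a consistency check.

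\textbf{Step 2: Lipschitz bound.} By the standing hypotheses of Section~\ref{sec:CBI}, the maps $J$, $R$, $\nabla H$ and $\kappa$ are locally Lipschitz. Compositions, sums and linear images of locally Lipschitz maps are locally Lipschitz, so $F$ is locally Lipschitz on $\R^n$.

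A locally Lipschitz map is Lipschitz on every compact set. To see this on $\overline{\calB_\delta}$, cover the ball by finitely many open balls on which $F$ is Lipschitz and use a Lebesgue-number argument. For pairs of points farther apart than the Lebesgue number, bound $\|F(x)-F(y)\|$ by $2\max_{\overline{\calB_\delta}}\|F\|$, which is finite by continuity. This yields a constant $L_F$ with $\|F(x)-F(y)\|\leq L_F\|x-y\|$ on $\overline{\calB_\delta}$. Taking $y=0$ and using Step 1 gives the linear bound.

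\textbf{Main obstacle.} The only delicate point is Step 1 together with the use of local Lipschitz continuity of $\kappa$. The statement relies on $\kappa$ being locally Lipschitz, which the paper posits rather than proves. If it had to be derived instead, I would write
\begin{equation*}
    \kappa(x_0)=P_{[\underline u,\overline u]}\Bigl(\tfrac1\alpha B^\top \lambda^\star(x_0)(0)\Bigr),\qquad (x^\star,p^\star)=\calS^{-1}(\iota_3 x_0),
\end{equation*}
and combine the Lipschitz continuity of $\calS^{-1}$ from Theorem~\ref{thm:bijectivity} with the nonexpansiveness of the pointwise projection.

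The catch is that point evaluation of $\lambda^\star$ at $\tau=0$ is not continuous on $\calX$. It requires the $H^1$ regularity from $p^\star\in\dom{\calA^*}$, which in turn needs an $H^1$ Lipschitz estimate obtained by bootstrapping through the adjoint equation $\calA^*p^\star=-\nabla J(x^\star)$. Since the hypothesis is already available in the paper, I would simply invoke it.
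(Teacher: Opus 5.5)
Your proof is correct and follows the paper's argument: Lipschitz continuity of $f_p+B_p\kappa$ on $\overline{\calB_\delta}$ together with $f_p(0)+B_p\kappa(0)=0$ gives a linear bound $L\|x_p\|_{\R^n}$, and the quadratic lower bound $r_p(s)\geq\underline c\,s^2$ then yields $\zeta_p=L^2/\underline c$; your Lebesgue-number argument just spells out why locally Lipschitz maps are Lipschitz on compact sets. As a side remark, you do not need the $H^1$ bootstrapping you anticipate for the point evaluation: for elements of $\dom{\calA^*}$ one has $\lambda(0)=\lambda_0$, and $\lambda_0$ is an explicit $\R^n$-coordinate of $\calZ$, so $\kappa=\Pi_{[\underline u,\overline u]}\bigl(\tfrac1\alpha B_p^\top\iota_3^*\calS^{-1}\iota_3\,\cdot\,\bigr)$ is globally Lipschitz directly from Theorem~\ref{thm:bijectivity}, as the paper itself notes at the end of the proof of Theorem~\ref{thm:main2}.
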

\begin{proof}
On $\overline{\calB_\delta}$ the map $x_p\mapsto f_p(x_p)+B_p\kappa(x_p)$ is Lipschitz and vanishes at
the origin, hence $\|f_p(x_p)+B_p\kappa(x_p)\|_{\R^n}\leq L_\delta\|x_p\|_{\R^n}$
for some $L_\delta>0$. Using the quadratic lower bound $r_p(s)\geq\underline c
\,s^2$ together with $\psi_p(x_p)=r_p(\|x_p\|_{\R^n})$ we conclude
\begin{align*}
    \|f_p(x_p)+B_p\kappa(x_p)\|_{\R^n}^2 &\leq L_\delta^2\|x_p\|_{\R^n}^2 \leq \tfrac{L_\delta^2}{\underline c}\psi_p(x_p) ,
\end{align*}
which proves the assertion with $\zeta_p \coloneqq L_\delta^2/\underline c$.
\end{proof}
We now explain the main idea of the controller design. In the definition of $\calB$, see \eqref{eq:calB}, we choose $B=B_p$. By Theorem \ref{thm:existence_optimal_sol}, for a fixed $x_0 \in \R^n$, the resulting optimal control $u^\star$ fulfills 
\begin{equation*}
    u^\star = P_F(\tfrac{1}{\alpha}B_p^\top \lambda^\star).
\end{equation*}
It is easily verified that $P_F:\calU \to \calU$ acts pointwise:
\begin{equation}\label{eq:orth_proj}
    P_F(u)(t)
    =
    \Pi_{[\underline{u},\overline{u}]}(u(t))
    \qquad\text{for a.e. }t\in[0,t_f],
\end{equation}
where $\Pi_{[\underline{u},\overline{u}]}:\mathbb{R}^m\to
[\underline{u},\overline{u}]$ denotes the Euclidean projection onto the box
$[\underline{u},\overline{u}]$ and is given componentwise by
\begin{equation*}
    \bigl(\Pi_{[\underline{u},\overline{u}]}(v)\bigr)_i
    =
    \min\!\left\{\overline{u}_i,
    \max\!\left\{\underline{u}_i,v_i\right\}\right\},
    \qquad i=1,\ldots,m.
\end{equation*}
According to \eqref{eq:orth_proj}, the state-feedback controller \eqref{eq:MPC_feedback} is equivalently rewritten as
\begin{equation*}
    u_p = \kappa(x_p) = \Pi_{[\underline{u},\overline{u}]}(\tfrac{1}{\alpha}B_p^\top\lambda_0^\star(x_p)).
\end{equation*}
Hence, under Assumption \ref{ass:stabilizing_controller}, the origin of the \emph{differential algebraic equation}
\begin{subequations}\label{eq:DAE}
    \begin{align}
    \dot x_p(t) & = f(x_p(t), w(t))\\
   0 & = g(x_p(t), w(t)) 
\end{align}
\end{subequations}
where
    \begin{align*}
   f(x_p,w) &= f_p(x_p) + B_p\Pi_{[\underline{u},\overline{u}]}(\tfrac{1}{\alpha} B_p^\top \lambda_0), \\
   g(x_p, w) &= \calJ w - \calR (w) - \iota_3 x_p,
\end{align*}
is locally asymptotically stable. 
Our goal is to interpret \eqref{eq:DAE} as the reduced dynamics of the autonomous singularly perturbed system
\begin{subequations}\label{eq:sing_pert}
    \begin{align}
    \dot x_p(t) & = f(x_p(t), w(t)) \label{eq:sing_pert1}  \\
    \varepsilon \dot w(t) & = g(x_p(t), w(t)), \label{eq:sing_pert2}
\end{align}
\end{subequations}
see also \cite{khalil2002nonlinear}. Precisely, the controller depicted in Figure \ref{fig:subopt} is given by \eqref{eq:sing_pert2}.

We now construct a Lyapunov function for the singularly perturbed dynamics \eqref{eq:sing_pert}, thereby showing existence of a global mild solution and asymptotic stability for a given set of initial conditions. 
Based on the proof of Theorem \ref{thm:main1}, we choose the Lyapunov function  $V_\rho: \R^n \times \calW \to [0, \infty)$ as
    \begin{equation*}
        V_\rho(x_p,w)\coloneqq W_\rho(w-w^\star(x_p)),
    \end{equation*} 
    where $W_\rho$ is defined in \eqref{eq:Wrho}. This quantity measures the (weighted) distance of the current optimizer state to the moving equilibrium $w^\star(x_p)$. Recall that, by \eqref{eq:Lyap_function_2},
\begin{align}\label{eq:Lyap_estimate_cl_optimizer}
     \nabla_w V_\rho (x_p, w) g(x_p,w)  \leq -\underbrace{\tfrac{2\zeta}{1+\rho \| \calA^{-1} \|}}_{\eqqcolon \eta } V_\rho(x_p,w)
\end{align}
for some constant $\zeta>0$ and a sufficiently small $\rho>0$, see \eqref{eq:Lyap_function_2}. We introduce the following candidate \emph{composite Lyapunov function} $\nu: \calB_\delta \times \calW \to [0, \infty)$ with
\begin{equation*}
    \nu(x_p,w)\coloneqq (1-d)\vartheta(x_p)+dV_\rho(x_p,w), \qquad 0<d<1.
\end{equation*}
\begin{thm}\label{thm:main2}
Let Assumption~\ref{ass:stabilizing_controller} be true. Then, there is $\varepsilon^\star >0$ such that for all $\varepsilon <\varepsilon^\star$, there exists $\delta'>0$ such that for every initial condition $z_0=(x_p^0,w_0)$ with $\|z_0\|_{\R^n \times \calW}<\delta'$,
the system \eqref{eq:sing_pert} admits a unique bounded mild solution on $[0, \infty)$, and this solution converges to zero. Consequently, the origin is locally asymptotically stable. If $f_p$ is globally Lipschitz and if Assumption \ref{ass:stabilizing_controller} holds with $\delta=\infty$ and $r_1\in\calK_\infty$, the asymptotic stability holds globally.
\end{thm}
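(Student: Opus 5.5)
We follow the Khalil-type composite Lyapunov argument for singularly perturbed systems, using $\nu=(1-d)\vartheta+dV_\rho$. Throughout, we write $w^\star(x_p)\coloneqq\calS^{-1}(\iota_3x_p)$ for the moving equilibrium, i.e.\ the root of $g(x_p,\cdot)=0$. By Theorem~\ref{thm:bijectivity}, $w^\star(\cdot)$ is Lipschitz with constant $L_\calS$ (note $\|\iota_3\|=1$) and $w^\star(0)=0$. Since $\kappa(x_p)=\Pi_{[\underline u,\overline u]}(\tfrac1\alpha B_p^\top\lambda_0^\star(x_p))$ with $\lambda_0^\star=\iota_3^*w^\star$, and the box projection is $1$-Lipschitz, we obtain the key interconnection estimate
\begin{equation*}
\|f(x_p,w)-f(x_p,w^\star(x_p))\|_{\R^n}\le \tfrac1\alpha\|B_p\|\|B_p^\top\|\,\|w-w^\star(x_p)\|_\calW\le c_1\sqrt{V_\rho(x_p,w)},
\end{equation*}
where $c_1=\|B_p\|^2/(\alpha\sqrt{m_\rho})$ by \eqref{eq:Lyap_function_1}. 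Well-posedness comes first. The right-hand side of \eqref{eq:sing_pert} is the generator $\operatorname{diag}(0,\varepsilon^{-1}\calJ)$ plus a perturbation that is locally Lipschitz (globally Lipschitz if $f_p$ is). Hence \cite[Thm.~11.1.5]{CurtainZwart2020}, possibly in its local version, gives a unique local mild solution. Global existence and boundedness will then follow from the Lyapunov decay, since the solution stays in a sublevel set of $\nu$ inside $\calB_\delta\times\calW$. The Dini-derivative computations are first carried out for classical solutions and then transferred to mild solutions by density, as in \cite[Lem.~11.2.5]{CurtainZwart2020}.

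\textbf{Decay of $\vartheta$.} We write $\dot x_p=f(x_p,w^\star(x_p))+\Delta$, where $f(x_p,w^\star(x_p))=f_p(x_p)+B_p\kappa(x_p)$ and $\Delta=B_p[\Pi(\tfrac1\alpha B_p^\top\lambda_0)-\Pi(\tfrac1\alpha B_p^\top\lambda_0^\star)]$. Using \eqref{eq:value_function_decay} and \eqref{eq:grad_bound}, we get
\begin{equation*}
\dot\vartheta\le-\psi_p+\|B_p^\top\nabla\vartheta\|\,\tfrac{\|B_p\|}{\alpha}\|w-w^\star\|\le -\psi_p+\beta_1\sqrt{\psi_p}\sqrt{V_\rho}.
\end{equation*}
It is precisely here that the port structure (the term $B_p^\top\nabla\vartheta$) enters.

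\textbf{Decay of $V_\rho$.} We have $\dot V_\rho=\tfrac1\varepsilon\nabla_wV_\rho\,g+\nabla_{x_p}V_\rho\,\dot x_p$. The first term is at most $-\tfrac{\eta}{\varepsilon}V_\rho$ by \eqref{eq:Lyap_estimate_cl_optimizer}. For the second term, $\nabla_{x_p}V_\rho=-\nabla W_\rho(w-w^\star)\,Dw^\star(x_p)$, and since $w^\star(\cdot)$ is the composition of the linear map $\iota_3$ with the Lipschitz map $\calS^{-1}$, we bound it via difference quotients (Dini derivative) by $\|\nabla W_\rho\|\,L_\calS\,\|\dot x_p\|\le c_2\sqrt{V_\rho}\,\|\dot x_p\|$. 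Lemma~\ref{lem:field_bound_ph} gives $\|\dot x_p\|\le\sqrt{\zeta_p\psi_p}+c_1\sqrt{V_\rho}$. Hence
\begin{equation*}
\dot V_\rho\le-\bigl(\tfrac{\eta}{\varepsilon}-\gamma\bigr)V_\rho+\beta_2\sqrt{\psi_p}\sqrt{V_\rho}.
\end{equation*}

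\textbf{Combination.} Adding the two estimates yields
\begin{equation*}
\dot\nu\le-\begin{bmatrix}\sqrt{\psi_p}\\\sqrt{V_\rho}\end{bmatrix}^\top\begin{bmatrix}1-d&-\tfrac12((1-d)\beta_1+d\beta_2)\\-\tfrac12((1-d)\beta_1+d\beta_2)&d(\tfrac\eta\varepsilon-\gamma)\end{bmatrix}\begin{bmatrix}\sqrt{\psi_p}\\\sqrt{V_\rho}\end{bmatrix}.
\end{equation*}
This matrix is positive definite iff $\varepsilon<\varepsilon^\star\coloneqq\eta/\bigl(\gamma+\tfrac{((1-d)\beta_1+d\beta_2)^2}{4d(1-d)}\bigr)$, by the same Sylvester argument as in Theorem~\ref{thm:main1}. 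For such $\varepsilon$, $\dot\nu\le-c(\psi_p+V_\rho)$, so $\nu$ is negative definite along trajectories. We then choose $\delta'$ so that $\|z_0\|<\delta'$ implies $\nu(z_0)<(1-d)r_1(\delta)$. This is possible since $\nu$ is continuous at $0$, with $V_\rho(x_p,w)\le M_\rho(\|w\|+L_\calS\|x_p\|)^2$. The sublevel set is then forward invariant and contained in $\calB_\delta\times\calW$, so all the local bounds remain valid, which gives global existence and boundedness.

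\textbf{Convergence to zero.} Integrating the decay estimate gives $\int_0^\infty\psi_p+V_\rho\,dt<\infty$. Boundedness of $\dot x_p$ gives uniform continuity of $\psi_p(x_p(t))$, so Barbalat's lemma yields $x_p\to0$. For the fast variable, we use $\dot V_\rho\le-\tfrac{\eta}{2\varepsilon}V_\rho+C\psi_p$, which follows after Young's inequality; with $\psi_p\to0$ this gives $V_\rho\to0$, hence $w\to w^\star(x_p)\to0$. Stability in the sense of Lyapunov follows from the sandwich $\nu$ between class-$\calK$ functions of $\|z\|$. In the global case ($\delta=\infty$, $r_1\in\calK_\infty$, $f_p$ globally Lipschitz), the constants $\beta_i,\gamma$ become global, provided the Lipschitz constant in Lemma~\ref{lem:field_bound_ph} is global. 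The sublevel sets of $\nu$ are then bounded for any initial condition, so $\varepsilon^\star$ is independent of the data and the argument above applies on all of $\R^n\times\calW$.

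\textbf{Main obstacle.} I expect the main obstacle to be the rigorous treatment of $\nabla_{x_p}V_\rho$, since $w^\star(\cdot)$ is only Lipschitz and not differentiable (the projection is nonsmooth), together with justifying the Dini-derivative computation along mild solutions of an infinite-dimensional singularly perturbed system. I would handle both by working with upper Dini derivatives and difference quotients, $\limsup_h\|w^\star(x_p(t+h))-w^\star(x_p(t))\|/h\le L_\calS\|\dot x_p\|$, and by approximating with classical solutions for initial data $w_0\in\dom{\calJ}$, followed by continuous dependence on the initial data.
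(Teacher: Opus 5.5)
Your proposal is correct and follows the paper's proof essentially step by step. You use the same composite function $\nu=(1-d)\vartheta+dV_\rho$ and the same cross-term bounds, leading to the matrix $\Xi_\varepsilon$ and its Sylvester threshold; the paper additionally optimizes $d=\beta_1/(\beta_1+\beta_2)$ to get $\varepsilon^\star=\eta/(\gamma+\beta_1\beta_2)$. You also use the same difference-quotient treatment of the merely Lipschitz map $w^\star=\calS^{-1}\iota_3$ with a density argument for mild solutions, and the same sublevel-set and blow-up exclusion; there you should, as the paper does, work with an auxiliary radius $\delta_r<\delta$, since $r_1$ is only defined on $[0,\delta)$.

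The only real deviation is the convergence step. You use integrability, Barbalat's lemma and a Young-type estimate for $V_\rho$, whereas the paper uses the comparison estimate $\dot\nu_+\le-\lambda_\varepsilon q\,(\alpha_2^{-1}(\nu))^2$ together with Curtain--Zwart, Thm.~11.2.7. Both routes work, but the rate in your Young estimate should be $\tfrac12(\tfrac{\eta}{\varepsilon}-\gamma)>0$ rather than $\tfrac{\eta}{2\varepsilon}$, since the latter would need $\varepsilon<\eta/(2\gamma)$.
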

\begin{proof}
On $Z\coloneqq\R^n\times\calW$, observe that \eqref{eq:sing_pert} admits the semilinear
form 
\begin{equation*}
    \dot z=\frakA_\varepsilon z+f_\varepsilon(z), \qquad z=(x_p,w),
\end{equation*}
with
\begin{equation*}
    \frakA_\varepsilon\coloneqq
    \begin{bmatrix} 0 & 0\\ 0 & \tfrac1\varepsilon\calJ\end{bmatrix},
    \qquad
    f_\varepsilon(z)\coloneqq
    \begin{bmatrix}
        f_p(x_p)+B_p\Pi_{[\underline u,\overline u]}
            (\tfrac1\alpha B_p^\top\lambda_0)\\[0.5mm]
        -\tfrac1\varepsilon\bigl(\calR(w)+\iota_3 x_p\bigr)
    \end{bmatrix}.
\end{equation*}
The operator $\frakA_\varepsilon$ generates the $C_0$-semigroup
\begin{equation*}
    (\operatorname{diag}(I,T_\varepsilon(t)))_{t \geq 0} \in L(Z),
\end{equation*}
where $(T_\varepsilon(t))_{t\geq0}$ is the unitary group generated by $\tfrac1\varepsilon \calJ$. The map $f_p$ is locally Lipschitz by Assumption~\ref{ass:stabilizing_controller}, $w \mapsto \lambda_0$ is linear and bounded, $\Pi_{[\underline u,\overline u]}$ is nonexpansive, $\calR$ is globally Lipschitz, and $\iota_3$ is linear and bounded. Hence, the map $f_\varepsilon$ is locally Lipschitz. Thus, by \cite[Thm.~11.1.5]{CurtainZwart2020}, there exists $t_{\max}>0$ such that \eqref{eq:sing_pert} has a unique mild solution $z=(x_p,w)$ on $[0,t_{\max})$,
and if $t_{\max}<\infty$, then
\begin{equation}\label{eq:finite_time_blow_up}
    \lim_{t\uparrow t_{\max}}\|z(t)\|_Z=\infty.
\end{equation}
We now establish
\begin{equation}\label{eq:Lyap_for_sing_pert}
    \dot \nu_+ (x_p,w) \leq 0 \qquad \text{on } \calB_\delta \times \calW
\end{equation} 
by first showing \eqref{eq:Lyap_for_sing_pert} on $\calB_\delta \times \dom{\calJ}$ and then extending the  corresponding integral inequality to arbitrary initial values in $\mathcal B_\delta\times\mathcal W$.

However, for general $(x_p, w) \in \calB_\delta \times \calW$, write $$\Delta f(x_p,w)\coloneqq f(x_p,w)-f(x_p,w^\star(x_p))$$ and recall that
$f(x_p,w^\star(x_p))=f_p(x_p)+B_p\kappa(x_p)$. Differentiating $\nu$ along the
trajectories of \eqref{eq:sing_pert} and using \eqref{eq:value_function_decay},
\begin{align}
\dot\nu_+(x_p,w)
&\leq (1-d)\nabla\vartheta(x_p)^\top f(x_p,w^\star(x_p)) + d\,\dot V_{\rho,+}(x_p,w) + (1-d)\nabla\vartheta(x_p)^\top\Delta f(x_p,w) \notag\\
&\leq -(1-d)\psi_p(x_p) + d\,\dot V_{\rho,+}(x_p,w) + (1-d)\nabla\vartheta(x_p)^\top\Delta f(x_p,w). \label{eq:nu_dot_start}
\end{align}
It remains to control the plant cross term and $\dot V_{\rho,+}$.

We first treat the plant cross term. By Cauchy-Schwarz, the nonexpansiveness of
$\Pi=\Pi_{[\underline u,\overline u]}$, \eqref{eq:grad_bound} and
\eqref{eq:Lyap_function_1},
\begin{align*}
    \nabla\vartheta(x_p)^\top\Delta f(x_p,w)
    &= \big\langle \nabla\vartheta(x_p),\,
        B_p\big(\Pi(\tfrac1\alpha B_p^\top\lambda_0)
        -\Pi(\tfrac1\alpha B_p^\top\lambda_0^\star(x_p))\big)\big\rangle \\
    &= \big\langle B_p^\top \nabla\vartheta(x_p),\,
        \Pi(\tfrac1\alpha B_p^\top\lambda_0)
        -\Pi(\tfrac1\alpha B_p^\top\lambda_0^\star(x_p))\big\rangle \\
    &\leq \tfrac{\|B_p\|}{\alpha}\,\|B_p^\top \nabla\vartheta(x_p)\|\,
        \|\lambda_0-\lambda_0^\star(x_p)\| \\
    &\leq \tfrac{\sqrt{\zeta_1}\|B_p\|}{\alpha}\,
        \psi_p^{1/2}(x_p)\,\|w-w^\star(x_p)\| \\
    &\leq \beta_1\,\psi_p^{1/2}(x_p)\,V_\rho^{1/2}(x_p,w),
\end{align*}
where
\begin{equation*}
     \beta_1\coloneqq\tfrac{\sqrt{2\zeta_1}\|B_p\|}
        {\alpha\sqrt{1-\rho\|\calA^{-1}\|}} .
\end{equation*}

Second, we show optimizer decay. Define $$Q_\rho \coloneqq \begin{bmatrix}
            I & \rho \calA^{-1}  \\ \rho \calA^{-*} & I
        \end{bmatrix} \in L(\calW).$$
We first establish the time regularity needed for the following difference-quotient calculation. Let $z_0 = (x_p^0, w_0) \in \mathcal B_\delta \times \operatorname{dom}\mathcal J$ and let $z(t) = (x_p(t), w(t))$ be the corresponding mild solution of \eqref{eq:sing_pert} on $[0,t_{\max})$.
The plant component satisfies
\begin{equation*}
    x_p(t) = x_p^0 + \int_0^t f(x_p(s), w(s)),\mathrm ds.
\end{equation*}
Hence
\begin{equation*}
    x_p \in C^1([0,t_{\max});\mathbb R^n),
    \qquad
    \dot x_p(t) = f(x_p(t), w(t)),
\end{equation*}
and in particular $x_p \in W^{1,\infty}([0,T];\mathbb R^n)$ for every $T < t_{\max}$. The optimizer equation reads
\begin{equation*}
\varepsilon \dot w(t) + \mathcal M(w(t)) = -\iota_3 x_p(t),
\qquad
\mathcal M := -\mathcal J + \mathcal R.
\end{equation*}
Moreover, the right derivative satisfies 
\begin{equation*} 
    \tfrac{\mathrm{d}^+}{\mathrm{d}t} w(t) = \tfrac{1}{\varepsilon} g(x_p(t), w(t)). 
\end{equation*}
Define
\begin{equation*}
    e(t) \coloneqq w(t) - w^\star(x_p(t)), \quad e_0 \coloneqq e(0), \quad \delta e(t) \coloneqq e(t)-e_0,
\end{equation*}
        and observe that $\nabla_w V_\rho (x_p^0, w_0)=Q_\rho e_0$.
Since $w$ is Lipschitz, $x_p$ is $C^1$, and $w^\star$ is Lipschitz,
\begin{equation*}
w(t) - w_0 = \mathcal O(t),
\qquad
w^\star(x_p(t)) - w^\star(x_p^0) = \mathcal O(t),
\end{equation*}
hence $\delta e(t) = \mathcal O(t)$ as $t \downarrow 0$.
Then, 
        \begin{align*}
             \langle Q_\rho e(t), e(t) \rangle_\calW 
            =  \langle Q_\rho (e_0 + \delta e(t)),  e_0 + \delta e(t) \rangle_\calW 
            = \langle Q_\rho e_0, e_0 \rangle_\calW + 2 \langle Q_\rho e_0, \delta e(t) \rangle_\calW  + \langle Q_\rho \delta e(t), \delta e(t) \rangle_\calW.
        \end{align*}
        Hence
        \begin{align*}
            \tfrac{1}{2}(\langle Q_\rho e(t), e(t) \rangle_\calW-\langle Q_\rho e_0, e_0 \rangle_\calW)  = \langle Q_\rho e_0, \delta e(t) \rangle_\calW  + \tfrac{1}{2} \langle Q_\rho \delta e(t), \delta e(t) \rangle_\calW.
        \end{align*}
        Using $\delta e(t) \in \mathcal{O}(t)$ and taking the limit $t \downarrow 0$ yields 
        \begin{align*}
             \limsup_{t \downarrow 0}  \tfrac{1}{2t}(\langle Q_\rho e(t), e(t) \rangle_\calW-\langle Q_\rho e_0, e_0 \rangle_\calW)  = \limsup_{t \downarrow 0} \tfrac{1}{t}\langle Q_\rho e_0, \delta e(t) \rangle_\calW.
        \end{align*}
        Using the fact
        \begin{equation*}
            \delta e(t) = (w(t)- w_0) - (w^\star(x_p(t))-w^\star(x_p^0))
        \end{equation*}
        we conclude
\begin{align*}
     \dot V_{\rho,+} (x_p^0, w_0) 
     & = \limsup_{t \downarrow 0}  \left\langle Q_\rho e_0, \tfrac{w(t)- w_0}{t} - \tfrac{w^\star(x_p(t))-w^\star(x_p^0)}{t}  \right\rangle_\calW  \\
     & = \tfrac{1}{\varepsilon}\nabla_w V_\rho (x_p^0, w_0) g(x_p^0,w_0) + \limsup_{t \downarrow 0} \left[ - \left\langle Q_\rho e_0, \tfrac{w^\star(x_p(t))-w^\star(x_p^0)}{t}  \right\rangle_\calW \right] .
\end{align*}    
By the Lipschitz continuity of $\calS^{-1}$ (Theorem~\ref{thm:bijectivity}) and
$w^\star=\calS^{-1} \circ \iota_3$, the second term fulfills
\begin{align*}
      - \left\langle Q_\rho e_0, \tfrac{w^\star(x_p(t))-w^\star(x_p^0)}{t}  \right\rangle_\calW 
     &   \leq \tfrac{1}{t} \| Q_\rho e_0 \|_\calW  \| \calS^{-1} (\iota_3 x_p(t)) -  \calS^{-1} (\iota_3 x_p^0) \|_\calW \\
     &  \leq   L_{\calS} \| Q_\rho \|_{L(\calW)} \| e_0 \|_\calW \| \tfrac{ x_p(t) -x_p^0}{t}\|_{\R^n}.
\end{align*} 
As a consequence and again using \eqref{eq:Lyap_function_1}:
\begin{align*}
     \limsup_{t \downarrow 0} \left[ - \left\langle Q_\rho e_0, \tfrac{w^\star(x_p(t))-w^\star(x_p^0)}{t}  \right\rangle_\calW \right] 
    & \leq  L_{\calS} \| Q_\rho \|_{L(\calW)} \| w_0 - w^\star(x_p^0) \|_\calW \| f(x_p^0,w_0)\|_{\R^n} \\
    & \leq  \tfrac{\sqrt{2}L_{\calS} \| Q_\rho \|_{L(\calW)}}{\sqrt{1-\rho \| \calA^{-1} \|}} V_\rho^{1/2}(x_p^0,w_0) \| f(x_p^0,w_0)\|_{\R^n}.
\end{align*}
Moreover, splitting $f$ yields
\begin{align*}
    \| f(x_p^0,w_0)\|_{\R^n} 
    &\leq  \|  f(x_p^0,w^\star(x_p^0))\|_{\R^n} + \| \Delta f(x_p^0,w_0) \|_{\R^n}  \\
    & \leq  \sqrt{\zeta_p }\,\psi_p^{1/2}(x_p^0)+ \tfrac{\sqrt{2}\| B_p \|^2}{\alpha \sqrt{1-\rho \| \calA^{-1} \|}} V_\rho^{1/2}(x_p^0,w_0),
\end{align*}
where we used \eqref{eq:field_bound} for the first summand and \eqref{eq:Lyap_function_1}
for the second. Combining everything gives
\begin{align*}
    \dot V_{\rho,+}(x_p,w)
    \leq \tfrac1\varepsilon\nabla_w V_\rho(x_p,w)g(x_p,w)+ \beta_2\,\psi_p^{1/2}(x_p)V_\rho^{1/2}(x_p,w)
        + \gamma\,V_\rho(x_p,w),
\end{align*}
with
\begin{equation*}
    \beta_2\coloneqq\tfrac{\sqrt{2\zeta_p}L_\calS\|Q_\rho\|_{L(\calW)}}
{\sqrt{1-\rho\|\calA^{-1}\|}} \quad \text{and} \quad \gamma\coloneqq\tfrac{2L_\calS\|Q_\rho\|_{L(\calW)}\|B_p\|^2}
{\alpha(1-\rho\|\calA^{-1}\|)}.
\end{equation*}
Inserting both steps into \eqref{eq:nu_dot_start} and using
$\nabla_w V_\rho(x_p,w)g(x_p,w)\le-\eta V_\rho(x_p,w)$
(see \eqref{eq:Lyap_estimate_cl_optimizer}),
\begin{align*}
\dot\nu_+(x_p,w)
& \leq -(1-d)\psi_p(x_p)
    + d\Big(\gamma-\tfrac\eta\varepsilon\Big)V_\rho(x_p,w) + \big((1-d)\beta_1+d\beta_2\big)
    \psi_p^{1/2}(x_p)V_\rho^{1/2}(x_p,w) \\
&= -\begin{bmatrix}\psi_p^{1/2}(x_p)\\
    V_\rho^{1/2}(x_p,w)\end{bmatrix}^{\!\top}
    \Xi_\varepsilon
    \begin{bmatrix}\psi_p^{1/2}(x_p)\\
    V_\rho^{1/2}(x_p,w)\end{bmatrix},
\end{align*}
where the symmetric matrix
\begin{equation*}
    \Xi_\varepsilon\coloneqq
    \begin{bmatrix}
        1-d & -\tfrac12\big((1-d)\beta_1+d\beta_2\big)\\[1mm]
        -\tfrac12\big((1-d)\beta_1+d\beta_2\big) &
        d\big(\tfrac\eta\varepsilon-\gamma\big)
    \end{bmatrix}
\end{equation*}
depends on the weight $d$ of the composite Lyapunov function $\nu$.
As $\psi_p^{1/2}(x_p),V_\rho^{1/2}(x_p,w)\ge 0$ vanish simultaneously only at
$(x_p,w)=(0,0)$, we have $\dot\nu_+<0$ off the equilibrium whenever
$\Xi_\varepsilon\succ 0$. By Sylvester's criterion this holds iff
\begin{equation*}
    d(1-d)\Big(\tfrac\eta\varepsilon-\gamma\Big)
    > \tfrac14\big((1-d)\beta_1+d\beta_2\big)^2,
\end{equation*}
i.e.\ $\varepsilon<\varepsilon_d\coloneqq
\eta\big/\big(\gamma+\tfrac{1}{4d(1-d)}((1-d)\beta_1+d\beta_2)^2\big)$.
Maximizing $\varepsilon_d$ over $d\in(0,1)$ gives $d^\star=\beta_1/(\beta_1+
\beta_2)$ and
\begin{equation*}
    \varepsilon^\star=\varepsilon_{d^\star}=\tfrac{\eta}{\gamma+\beta_1\beta_2},
\end{equation*}
cf.\ \cite[Thm.~11.3]{khalil2002nonlinear}. From now on we fix the weight $d=d^\star$ in the definition of $\nu$; all subsequent occurrences of $\nu$ and $\Xi_\varepsilon$ refer to this choice. Note that $\varepsilon^\star$ depends only on $\eta,\gamma,\beta_1,\beta_2$ and hence neither on $z_0$ nor on the auxiliary radius $\delta_r$ introduced below. For every $\varepsilon < \varepsilon^\star$ the matrix
$\Xi_\varepsilon$ is positive definite, hence
\begin{equation*}
     \dot\nu_+(x_p,w) \leq -\lambda_{\min}(\Xi_\varepsilon)\bigl(\psi_p(x_p)+V_\rho(x_p,w)\bigr)<0
\end{equation*}
for all $(x_p,w)\in \calB_\delta \times\dom \calJ$ with $(x_p,w)\neq(0,0)$.

The preceding calculation is initially valid for classical solutions. Its integral form extends to mild solutions by the standard 
ument: approximate the initial value $z_0 \in \calB_\delta \times \calW$ by elements of the dense domain $\dom{\frakA_\varepsilon}$, use continuous dependence of mild solutions on their initial values, and pass to the limit. Consequently, every mild solution that remains in $\calB_\delta\times\calW$ satisfies
\begin{align}\label{eq:nu_integral_decay}
    \hspace{-1.0mm}\nu(z(t)\hspace{-0.2mm})\hspace{-0.5mm}-\hspace{-0.5mm}\nu(z(s)\hspace{-0.2mm})
    \hspace{-0.5mm}\leq \hspace{-0.5mm} -\lambda_\varepsilon \hspace{-1mm} \int_s^t\hspace{-0.5mm}
    (\psi_p(x_p(\tau)\hspace{-0.2mm})\hspace{-0.5mm}+\hspace{-0.5mm}V_\rho(z(\tau)\hspace{-0.2mm})) \, \mathrm d\tau,
\end{align}
for $0\leq s\leq t<t_{\max}$, where
$\lambda_\varepsilon\coloneqq\lambda_{\min}(\Xi_\varepsilon)>0$. By \cite[Lem. 11.2.5]{CurtainZwart2020}, this proves \eqref{eq:Lyap_for_sing_pert}.

Now, choose an auxiliary radius $\delta_r  \in (0, \delta)$, fix
\begin{equation*}
   c \in  (0,(1-d^\star)\,r_1(\delta_r))
\end{equation*}
and define
\begin{equation*}
    \Omega_c \coloneqq \left\{z=(x_p,w)\in\calB_{\delta_r}\times\calW:\nu(z)<c\right\}.
\end{equation*}
Let $z_0\in\Omega_c$, and let $z(\cdot;z_0)=(x_p(\cdot),w(\cdot))$ be the corresponding maximal mild solution on $[0,t_{\max})$. Introduce the first exit time
\begin{equation*}
    t_r\coloneqq
    \inf\left\{t\in[0,t_{\max}):\|x_p(t)\|_{\R^n}\geq\delta_r\right\},
\end{equation*}
with the convention $\inf\emptyset=\infty$. For $0\leq t<\min\{t_r,t_{\max}\}$, \eqref{eq:nu_integral_decay} yields
\begin{equation}\label{eq:nu_nonincreasing}
    \nu(z(t))\leq\nu(z_0)<c.
\end{equation}
Since
\begin{equation*}
    \nu(x_p,w)\geq(1-d^\star)\vartheta(x_p) \geq(1-d^\star)r_1(\|x_p\|_{\R^n}),
\end{equation*}
we obtain
\begin{equation}\label{eq:xp_invariant_bound}
   \hspace{-1mm} \|x_p(t)\|_{\R^n} \leq \delta_c \coloneqq r_1^{-1}\left(\tfrac{\nu(z_0)}{1-d^\star}\right) <r_1^{-1}\left(\tfrac{c}{1-d^\star}\right)<\delta_r.
\end{equation}
If $t_r<t_{\max}$, continuity of $x_p$ and \eqref{eq:xp_invariant_bound} would imply simultaneously $\|x_p(t_r)\|_{\R^n}=\delta_r$ and $\|x_p(t_r)\|_{\R^n}\leq\delta_c<\delta_r$, which is impossible. Thus, the solution cannot leave $\calB_{\delta_r}$ through its boundary.

It remains to exclude finite-time blow-up. Recall $m_\rho=\tfrac{1-\rho\|\calA^{-1}\|}{2}>0$ from \eqref{eq:mrho_Mrho}.
By \eqref{eq:Lyap_function_1}, \eqref{eq:nu_nonincreasing}, and the definition
of $V_\rho$,
\begin{align}\label{eq:error_uniform_bound}
    d^\star m_\rho
    \|w(t)-w^\star(x_p(t))\|_{\calW}^2 \leq d^\star V_\rho(x_p(t),w(t))\leq\nu(z(t))\leq\nu(z_0).
\end{align}
The map $w^\star=\calS^{-1}\iota_3$ is Lipschitz, see Theorem \ref{thm:bijectivity} and  $\calS^{-1}(0)=0$, see Theorem \ref{thm:bijectivity}; hence, with $L_\calS$ denoting its Lipschitz constant,
\begin{equation*}
    \|w^\star(x_p(t))\|_{\calW}\leq L_\calS\delta_c.
\end{equation*}
Consequently, using the triangle inequality,
\begin{equation}\label{eq:w_uniform_bound}
    \|w(t)\|_{\calW} \leq \sqrt{\tfrac{\nu(z_0)}{d^\star m_\rho}}+L_\calS\delta_c
\end{equation}
for all $t<\min\{t_r,t_{\max}\}$. If $t_{\max}<\infty$ occurred before the exit time, \eqref{eq:xp_invariant_bound} and \eqref{eq:w_uniform_bound} would contradict the blow-up alternative
\eqref{eq:finite_time_blow_up}. We therefore have
\begin{equation*}
    t_{\max}=t_r=\infty.
\end{equation*}
In particular, $\Omega_c$ is positively invariant and every solution starting
in $\Omega_c$ is unique, global, and bounded.

We next prove convergence. Recall $M_\rho=\tfrac{1+\rho\|\calA^{-1}\|}{2}$ from \eqref{eq:mrho_Mrho} and set $e\coloneqq w-w^\star(x_p)$.
Moreover, by the quadratic lower bound on $\psi_p$, see Assumption \ref{ass:stabilizing_controller}, and the coercivity of $W_\rho$, see \eqref{eq:Lyap_function_1},
\begin{equation*}
    \psi_p(x_p)+V_\rho(x_p,w)
    \geq \underline c\|x_p\|_{\R^n}^2+m_\rho\|e\|_{\calW}^2.
\end{equation*}
Using $w^\star(0)=0$ and the Lipschitz continuity of
$w^\star$, we obtain
\begin{equation*}
     \|w\|_{\mathcal W} \leq \|e\|_{\mathcal W} +L_{\mathcal S}\|x_p\|_{\mathbb R^n}.
\end{equation*}
Consequently, for $z=(x_p, w) \in Z$:
\begin{align*}
    \|z\|_Z^2
    =
    \|x_p\|_{\mathbb R^n}^2+\|w\|_{\mathcal W}^2 \leq
    (1+2L_{\mathcal S}^2)\|x_p\|_{\mathbb R^n}^2
    +2\|e\|_{\mathcal W}^2 \leq \max\{1+2L_{\mathcal S}^2,2\}
    \bigl(
        \|x_p\|_{\mathbb R^n}^2+\|e\|_{\mathcal W}^2
    \bigr).
\end{align*}
Moreover,
\begin{align*}
    \psi_p(x_p)+V_\rho(x_p,w)
    \geq
    \underline c\|x_p\|_{\mathbb R^n}^2
    +m_\rho\|e\|_{\mathcal W}^2 \geq
    \min\{\underline c,m_\rho\}
    \bigl(
        \|x_p\|_{\mathbb R^n}^2+\|e\|_{\mathcal W}^2
    \bigr).
\end{align*}
Thus, with $q\coloneqq \tfrac{\min\{\underline c,m_\rho\}}{\max\{1+2L_{\mathcal S}^2,2\}}>0$ and $z=(x_p,w)\in\calB_\delta\times\calW$:
\begin{equation}\label{eq:dissipation_coercive_z}
    \psi_p(x_p)+V_\rho(x_p,w)
    \geq q\|z\|_Z^2.
\end{equation}
Since \(\delta_r<\delta\), the set \(\overline{\mathcal B}_{\delta_r}\) is a compact subset of \(\mathcal B_\delta\). The continuity of \(\nabla\vartheta\) therefore implies that
\begin{equation*}
     M_r\coloneqq \max_{\|x_p\|_{\mathbb R^n}\leq\delta_r} \|\nabla\vartheta(x_p)\|_{\mathbb R^n}<\infty.
\end{equation*}
Since \(\vartheta(0)=0\), the fundamental theorem of calculus
gives
\begin{equation*}
    \vartheta(x_p)
    = \int_0^1 \nabla\vartheta(sx_p)^\top x_p\,\mathrm ds \leq
    M_r\|x_p\|_{\mathbb R^n},\quad x_p\in\calB_{\delta_r}.
\end{equation*}
Thus, defining $\overline r(s)\coloneqq(M_r+1)s$ we have \(\overline r\in\mathcal K\) and
\begin{equation*}
     \vartheta(x_p) \leq\overline r(\|x_p\|_{\mathbb R^n}),  \qquad x_p\in\mathcal B_{\delta_r}.
\end{equation*}
Now, the upper bound in \eqref{eq:Lyap_function_1} and the Lipschitz continuity of $\calS^{-1}$ give
\begin{align*}
    \nu(z) \leq (1-d^\star)\overline r(\|x_p\|_{\R^n}) + d^\star M_\rho\|e\|_{\calW}^2
    \leq (1-d^\star)\overline r(\|z\|_Z) + d^\star M_\rho(1+L_\calS)^2\|z\|_Z^2\eqqcolon\alpha_2(\|z\|_Z),
\end{align*}
where $\alpha_2\in\calK$. Hence, for $z \in \Omega_c$:
\begin{equation*}
    \dot\nu_+(z) \leq -\lambda_\varepsilon q\|z\|_Z^2\leq-\lambda_\varepsilon q
      \bigl(\alpha_2^{-1}(\nu(z))\bigr)^2.
\end{equation*}
By \cite[Thm. 11.2.7 b.]{CurtainZwart2020}, for all $\varepsilon < \varepsilon^\star$:
\begin{equation*}
    z(t) = z(t;z_0)\to 0, \qquad \text{for all } z_0 \in \Omega_c.
\end{equation*}

For completeness, the same estimates also yield Lyapunov stability, see \cite[Def. 11.2.2]{CurtainZwart2020}. From $\|z\|_Z\leq(1+L_\calS)\|x_p\|_{\R^n}+\|e\|_{\calW}$ we infer that, whenever $\|z\|_Z\geq s$, at least one of the inequalities
\begin{equation*}
     \|x_p\|_{\mathbb R^n}
    \geq
    \tfrac{s}{2(1+L_{\mathcal S})},
    \qquad
    \|e\|_{\mathcal W}\geq\tfrac{s}{2}
\end{equation*}
must hold. Hence,
for sufficiently small $s>0$, define
\begin{equation*}
    \alpha_1(s)\coloneqq
    \min\left\{
       (1-d^\star)r_1\left(\tfrac{s}{2(1+L_\calS)}\right),
       \tfrac{d^\star m_\rho}{4}s^2
    \right\}.
\end{equation*}
It follows that
\begin{equation*}
    \alpha_1(\|z\|_Z)\leq\nu(z)\leq\alpha_2(\|z\|_Z)
\end{equation*}
in a neighborhood of the origin. Together with the monotonicity of $\nu$
along solutions, this proves Lyapunov stability.

Finally, choose any $\delta'>0$ such that
\begin{equation}\label{eq:delta_prime_choice}
    0<\delta'<  \min\left\{\delta_r,\alpha_2^{-1}(c)\right\}.
\end{equation}
Then: $ \calB_{\delta'}^Z
    \coloneqq\{z\in Z:\|z\|_Z<\delta'\}
    \subset\Omega_c$. Therefore, every solution starting in $\calB_{\delta'}^Z$ is unique, global, and bounded and converges to the origin. This proves local asymptotic
stability.

Finally, assume that $f_p$ is globally Lipschitz and that Assumption~\ref{ass:stabilizing_controller} holds with $\delta=\infty$ and $r_1\in\calK_\infty$. By \eqref{eq:orth_proj} and Theorem~\ref{thm:bijectivity}, the feedback
\begin{equation*}
    \kappa = \Pi_{[\underline u,\overline u]}\bigl(\tfrac1\alpha B_p^\top \iota_3^*\calS^{-1}\iota_3\,\cdot\,\bigr)
\end{equation*}
is globally Lipschitz with constant at most $\|B_p\|L_\calS/\alpha$, since $\Pi_{[\underline u,\overline u]}$ is nonexpansive and $\iota_3$ is an isometry. Hence $x_p \mapsto f_p(x_p)+B_p\kappa(x_p)$ is globally Lipschitz, so Lemma~\ref{lem:field_bound_ph} holds with $\delta=\infty$, and $f_\varepsilon$ is globally Lipschitz, so that the mild solution is global for every $z_0\in Z$. Consequently all estimates established above are valid on all of $Z$, and the constants $\eta,\gamma,\beta_1,\beta_2$, and therefore $\varepsilon^\star$, are independent of the initial value.

Now, let $\varepsilon < \varepsilon^\star$ and let $z_0=(x_p^0,w_0)\in Z$ be arbitrary. Since $r_1\in\calK_\infty$, we may choose the auxiliary radius $\delta_r>0$ so large that $ \delta_r > \|x_p^0\|_{\R^n}$ and $(1-d^\star)\,r_1(\delta_r)>\nu(z_0)$,
and subsequently $c\in\bigl(\nu(z_0),(1-d^\star)r_1(\delta_r)\bigr)$. With these choices $z_0\in\Omega_c$, and the argument of the previous paragraphs applies verbatim and yields $z(t;z_0)\to0$ as $t\to\infty$. Since $z_0\in Z$ was arbitrary and Lyapunov stability has already been established, the origin is globally asymptotically stable.
\end{proof}

\section{Numerical simulation}\label{sec:num}
\noindent
We illustrate the theoretical results with a genuinely nonlinear, undamped port-Hamiltonian \emph{Duffing oscillator}. Precisely, the nonlinear plant is given by 
\[
\dot q = p, 
\qquad
\dot p = - q - \beta q^3 + u_{\mathrm{p}},
\]
with initial condition \(x_{\mathrm{p}}(0)=(1.5,0)^\top\). 

Following \eqref{eq:OCP}, the underlying OCP reads 
\begin{align*}
\begin{split}
        \min_{(x,u)\in H^1([0,t_f],\R^2)\times L^2([0,t_f],\R)} \int_0^{t_f}\tfrac{1}{2}\|x(\tau)\|^2 + \tfrac{\alpha}{2} \|u(\tau)\|^2 \, \mathrm{d}\tau  \\ \mathrm{s.t.}\quad \dot x(\tau) = {\begin{smallpmatrix}
            0 & 1 \\ -1 & 0
        \end{smallpmatrix}} x(\tau) + {\stack{0}{1}} u(\tau),\quad x(0)=x_0, \qquad u(\tau) \in [-0.5,0.7] \text{ a.e..}
\end{split}
\end{align*}
Note that the dynamics in the OCP are the linearization of the Duffing plant about the origin, whereas the plant itself is nonlinear. The singularly perturbed plant-controller dynamics~\eqref{eq:sing_pert} generate the applied control. The control input is constrained to \(u_{\mathrm{p}}\in[-0.5,0.7]\). The prediction horizon is $t_f=5$, discretized with $20$ implicit-Euler steps.

Figure~\ref{fig:state-norms} shows the plant-state norm and the norm of the full interconnected state for \(\varepsilon\in\{10^{-3},10^{-2},10^{-1}\}\). Smaller values of \(\varepsilon\), corresponding to faster primal-dual dynamics, yield rapid convergence of both norms. For the largest value, \(\varepsilon=10^{-1}\), the optimizer evolves on a time scale comparable to that of the plant, resulting in substantially slower decay. Figure~\ref{fig:plant-control} shows the corresponding control inputs. The applied control initially reaches the constraint boundaries, demonstrating that the projection mechanism enforces admissibility. Once the state approaches the equilibrium, the control leaves the active set and converges smoothly toward zero; for $\varepsilon=10^{-1}$ this happens markedly later.

\begin{figure}[t]
    \centering
    \includegraphics[width=0.49\columnwidth]
        {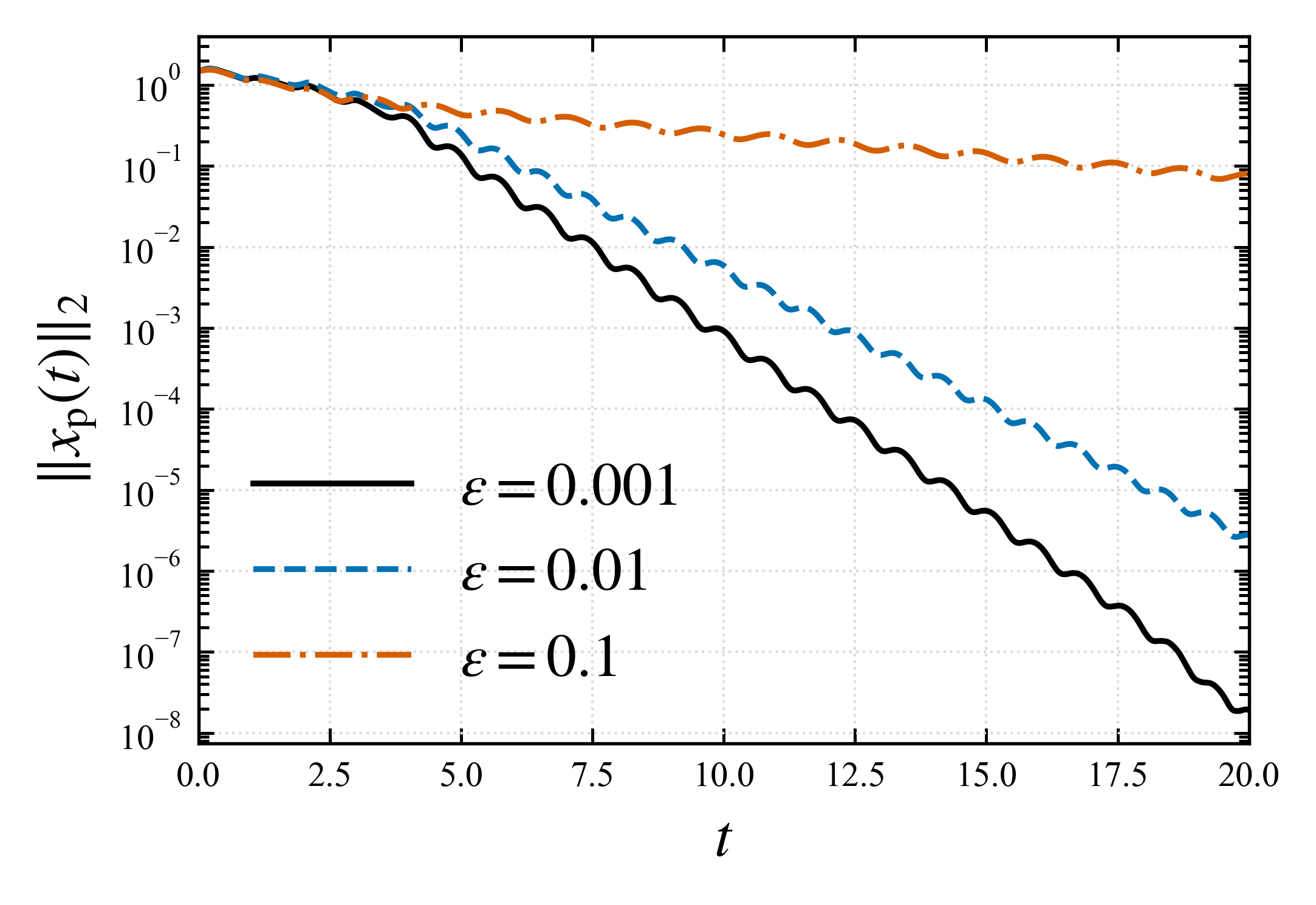}
    \includegraphics[width=0.49\columnwidth]
        {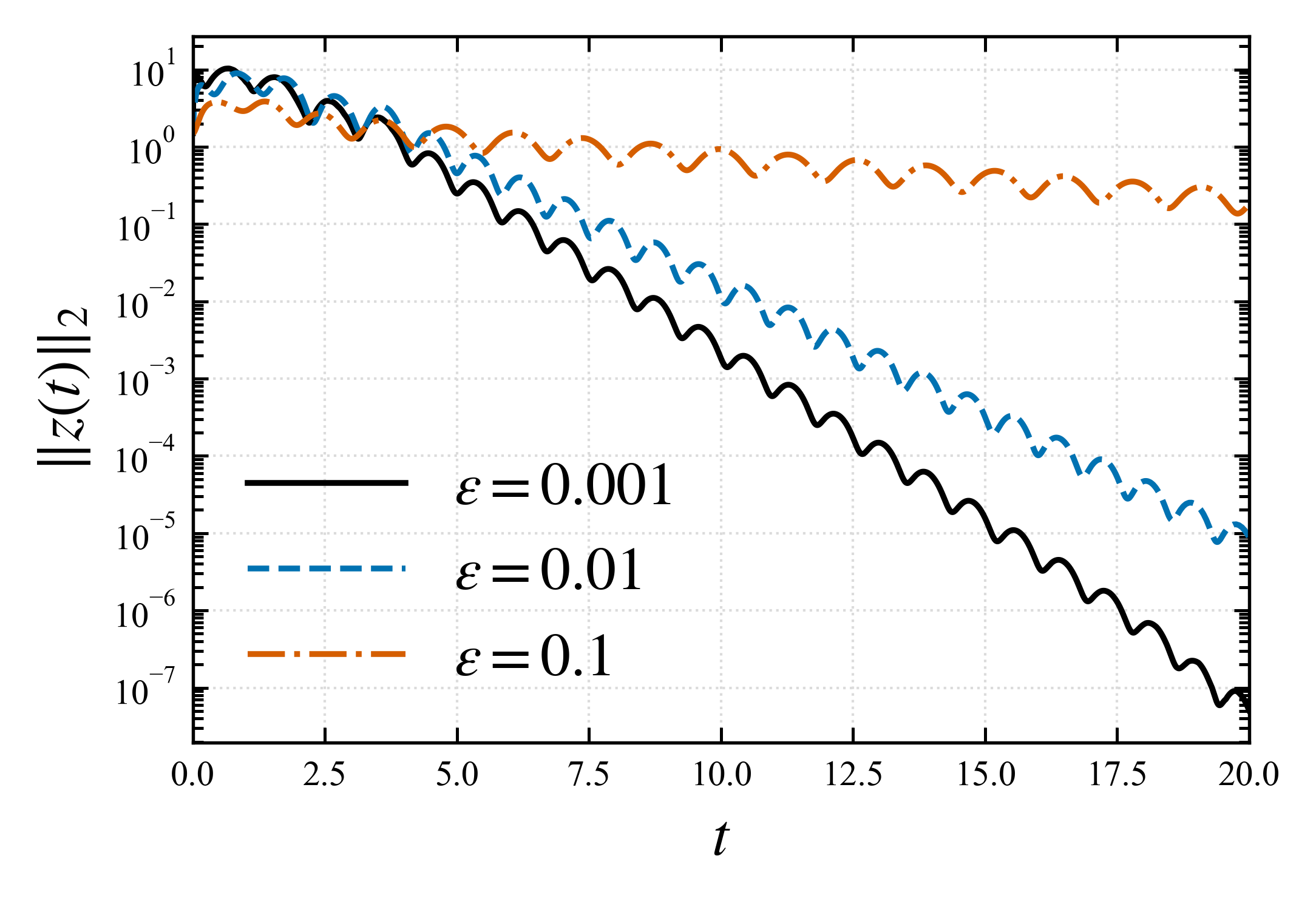}
    \caption{State-norm trajectories for different values of the time-scale parameter $\varepsilon$: plant-state norm and full interconnected-state norm.}
    \label{fig:state-norms}
\end{figure}
\begin{figure}[t]
    \centering
    \includegraphics[width=0.5\columnwidth]{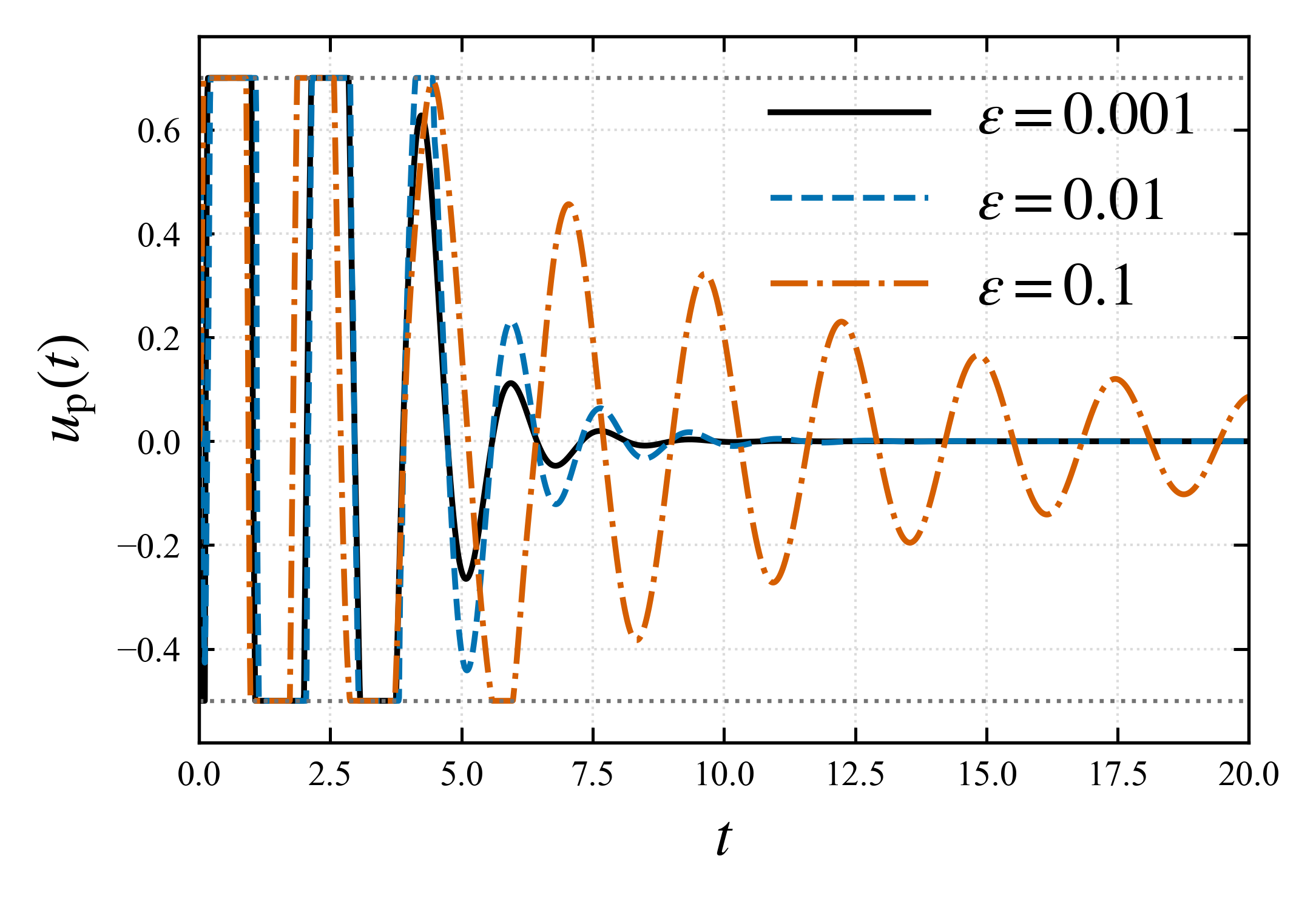}
    \caption{Control input applied to the nonlinear Duffing plant.}
    \label{fig:plant-control}
\end{figure}
\section{Conclusion}

\noindent We proposed an optimization-based control-by-interconnection framework for nonlinear pHs by representing finite-horizon primal–dual optimization dynamics as a pH controller. The coupled plant–controller dynamics form a singularly perturbed system, with the parameter $\varepsilon$ determining the optimizer speed. Using a composite Lyapunov function, we established local asymptotic stability for sufficiently small $\varepsilon$ and global stability under additional assumptions. Pointwise control constraints were incorporated through a projection operator, and the numerical example illustrated the effects of optimizer speed and input saturation.

\bibliography{references}

@article{cherukuri2017saddle,
  title={Saddle-point dynamics: conditions for asymptotic stability of saddle points},
  author={Cherukuri, Ashish and Gharesifard, Bahman and Cortes, Jorge},
  journal={SIAM Journal on Control and Optimization},
  volume={55},
  number={1},
  pages={486--511},
  year={2017},
  publisher={SIAM}
}

@article{gernandt2025port,
  title={Port-{H}amiltonian structures in infinite-dimensional optimal control: Primal--Dual gradient method and control-by-interconnection},
  author={Gernandt, Hannes and Schaller, Manuel},
  journal={Systems Control Lett.},
  volume={197},
  pages={106030},
  year={2025},
  publisher={Elsevier}
}

@book{Barb10,
  title={Nonlinear Differential Equations of Monotone Types in Banach Spaces},
  author={Barbu, V.},
  publisher={Springer, New York},
  series={Springer Monographs in Mathematics},
  year={2010},
 }

@book{BausComb2011,
  title={Convex Analysis and Monotone Operator Theory in Hilbert Spaces},
  author={Heinz H. Bauschke and Patrick L. Combettes},
  publisher={CMS Books in Mathematics},
  year={2011},
 }

@book{EkelTema99,
  title={Convex analysis and variational problems},
  author={Ekeland, Ivar and Temam, Roger},
  year={1999},
  publisher={SIAM Philadelphia}
}

@book{jacob2012linear,
	title={Linear port-{H}amiltonian systems on infinite-dimensional spaces},
	author={Jacob, Birgit and Zwart, Hans J},
	volume={223},
	year={2012},
	publisher={Springer Science \& Business Media}
}

@book{CurtainZwart2020,
  author    = {Ruth Curtain and Hans Zwart},
  title     = {Introduction to Infinite-Dimensional Systems Theory: A State-Space Approach},
  series    = {Texts in Applied Mathematics},
  publisher = {Springer},
  year      = {2020},
  edition   = {1}
  }

@article{StegPers15,
title = {Port-{H}amiltonian Formulation of the Gradient Method Applied to Smart Grids},
journal = {IFAC-PapersOnLine},
volume = {48},
number = {13},
pages = {13-18},
year = {2015},
issn = {2405-8963},
doi = {https://doi.org/10.1016/j.ifacol.2015.10.207}    ,
author = {Stegink,T.W. and De Persis, C. and {van der Schaft}, A.J.},
}

@Article{Willems1972a,
	author    = {Willems, Jan C},
	title     = {Dissipative dynamical systems part I: General Theory},
	journal   = {Archive for rational mechanics and analysis},
	year      = {1972},
	volume    = {45},
	number    = {5},
	pages     = {321-351},
	publisher = {Springer},
}

@Book{Brogliato07,
	author    = {Brogliato, Bernard and Lozano, Rogelio and Maschke, Bernhard and Egeland, Olav},
	publisher = {Springer},
	title     = {Dissipative Systems Analysis and Control},
	year      = {2007},
	volume    = {2},
	journal   = {Theory and Applications},
}

@InCollection{maschke1993port,
  author    = {Maschke, Bernhard and van der Schaft, Arjan},
  booktitle = {Nonlinear Control Systems Design},
  publisher = {Elsevier},
  title     = {{Port-controlled {H}amiltonian systems: modelling origins and systemtheoretic properties}},
  year      = {1992},
  pages     = {359--365},
}

@ARTICLE{YoshInou2019,
	author={Yoshida, Keisuke and Inoue, Masaki and Hatanaka, Takeshi},
	journal={IEEE Control Syst. Lett.},
	title={Instant {MPC} for Linear Systems and Dissipati\-vity-Based Stability Analysis},
	year={2019},
	volume={3},
	number={4},
	pages={811-816},
	doi={10.1109/LCSYS.2019.2918095}
}

@article{ortega2002interconnection,
  title={Interconnection and damping assignment passivity-based control of port-controlled {H}amiltonian systems},
  author={Ortega, Romeo and Van Der Schaft, Arjan and Maschke, Bernhard and Escobar, Gerardo},
  journal={Automatica},
  volume={38},
  number={4},
  pages={585--596},
  year={2002},
  publisher={Elsevier}
}

@book{RawlMayn17,
  title={Model {P}redictive {C}ontrol: {T}heory, {C}omputation, and {D}esign},
  author={Rawlings, James Blake and Mayne, David Q and Diehl, Moritz and others},
  volume={2},
  year={2017},
  publisher={Nob Hill Publishing Madison, WI}
}

@book{GrunPann16,
  title={Nonlinear model predictive control},
  author={Gr{\"u}ne, Lars and Pannek, J{\"u}rgen},
  booktitle={Nonlinear model predictive control: {T}heory and algorithms},
  year={2016},
  publisher={Springer}
}

@article{Pham22,
title = {A combined {C}ontrol by {I}nterconnection—{M}odel {P}redictive {C}ontrol design for constrained {P}ort-{H}amiltonian systems},
journal = {Systems Control Lett.},
volume = {167},
pages = {105336},
year = {2022},
issn = {0167-6911},
doi = {https://doi.org/10.1016/j.sysconle.2022.105336} ,
author = {T.H. Pham and N.M.T. Vu and I. Prodan and L. Lefèvre},
}

@inproceedings{vu2023port,
  title={Port-{H}amiltonian observer for state-feedback control design},
  author={Vu, NMT and Pham, TH and Prodan, I and Lefèvre, L},
  booktitle={2023 European Control Conference (ECC)},
  pages={1--6},
  year={2023},
  organization={IEEE}
}

@article{Zanelli21,
title = {A {L}yapunov function for the combined system-optimizer dynamics in inexact model predictive control},
journal = {Automatica},
volume = {134},
pages = {109901},
year = {2021},
issn = {0005-1098},
doi = {https://doi.org/10.1016/j.automatica.2021.109901} ,
author = {Andrea Zanelli and Quoc Tran-Dinh and Moritz Diehl},
}

@article{CamSch23,
author = {Camlibel, M. K. and van der Schaft, A. J.},
title = {Port-{H}amiltonian Systems Theory and Monotonicity},
journal = {SIAM J. Control Optim.},
volume = {61},
number = {4},
pages = {2193-2221},
year = {2023},
doi = {10.1137/22M1503749},
eprint = { 
        https://doi.org/10.1137/22M1503749
}
}

@article{SchJ14,
  title = {Port-{{{H}amiltonian Systems Theory}}: {{An Introductory Overview}}},
  author = {van der Schaft, A. J. and Jeltsema, D.},
  year = {2014},
  journal = {Foundations and Trends{\textregistered} in Systems and Control},
  volume = {1},
  number = {2-3},
  pages = {173--378},
  issn = {2325-6818},
  doi = {10.1561/2600000002}
}

@article{MehU22, title={Control of port-{H}amiltonian differential-algebraic systems and applications}, volume={32}, DOI={10.1017/S0962492922000083}, journal={Acta Numerica}, publisher={Cambridge University Press}, author={Mehrmann, V. and Unger, B.}, year={2023}, pages={395–515}}

@article{GeSc_DiehBock05,
	title={A real-time iteration scheme for nonlinear optimization in optimal feedback control},
	author={Diehl, Moritz and Bock, Hans Georg and Schl{\"o}der, Johannes P},
	journal={SIAM J. Control Optim.},
	volume={43},
	number={5},
	pages={1714--1736},
	year={2005},
	publisher={SIAM},
    doi = {10.1137/S0363012902400713}
}

@article{GeSc_ScokMayn99,
  author={Scokaert, P.O.M. and Mayne, D.Q. and Rawlings, J.B.},
  journal={IEEE Trans. Automat. Control}, 
  title={Suboptimal model predictive control (feasibility implies stability)}, 
  year={1999},
  volume={44},
  number={3},
  pages={648-654},
  doi={10.1109/9.751369}}

@article{CherMall2016,
	title = {Asymptotic convergence of constrained primal–dual dynamics},
	journal = {Systems Control Lett.},
	volume = {87},
	pages = {10-15},
	year = {2016},
	issn = {0167-6911},
	doi = {https://doi.org/10.1016/j.sysconle.2015.10.006},
	author = {Ashish Cherukuri and Enrique Mallada and Jorge Cortés},
}

@incollection{van2004port,
  title={Port-{H}amiltonian systems: network modeling and control of nonlinear physical systems},
  author={van der Schaft, Arjan J},
  booktitle={Advanced dynamics and control of structures and machines},
  pages={127--167},
  year={2004},
  publisher={Springer}
}

@book{khalil2002nonlinear,
author    = {Khalil, Hassan K.},
title     = {Nonlinear Systems},
edition   = {3},
year      = {2002},
publisher = {Prentice Hall},
address   = {Upper Saddle River, NJ},
isbn      = {0-13-067389-7}
}

@book{villani2009hypocoercivity,
  title={Hypocoercivity},
  author={Villani, C{\'e}dric},
  volume={202},
  year={2009},
  publisher={American Mathematical Society}
}

@article{achleitner2025hypocoercivity,
  title={Hypocoercivity in Hilbert spaces},
  author={Achleitner, Franz and Arnold, Anton and Mehrmann, Volker and Nigsch, Eduard A},
  journal={Journal of Functional Analysis},
  volume={288},
  number={2},
  pages={110691},
  year={2025},
  publisher={Elsevier}
}

@article{jakob2026generalized,
  title={A Generalized Plant Perspective on Linear-Convex Feedback Optimization},
  author={Jakob, Fabian and Iannelli, Andrea},
  journal={arXiv preprint arXiv:2606.14471},
  year={2026}
}

@article{esterhuizen26,
author = {Esterhuizen, Willem and Maschke, Bernhard and Preuster, Till and Schaller, Manuel and Worthmann, Karl},
title = {Nonlinear Controlled Port-{H}amiltonian Systems: Existence of (Optimal) Solutions},
journal = {SIAM J. Control Optim.},
volume = {64},
number = {4},
pages = {2569-2592},
year = {2026},
doi = {10.1137/24M1705093}
}
\bibliographystyle{plain}

\end{document}